\documentclass[reqno,10pt]{amsart}

\usepackage{amssymb,psfrag,graphicx}
\usepackage[utf8]{inputenc}
\usepackage{enumerate}

\numberwithin{equation}{section}



\theoremstyle{plain}
\newtheorem{thm}{Theorem}[section]

\newtheorem{prop}{Proposition}[section]

\theoremstyle{definition}
\newtheorem{rmk}{Remark}[section]


\newcommand{\be}{\begin{equation}}
\newcommand{\ee}{\end{equation}}
\newcommand{\bea}{\begin{eqnarray}}
\newcommand{\eea}{\end{eqnarray}}
\newcommand{\beas}{\begin{eqnarray*}}
\newcommand{\eeas}{\end{eqnarray*}}
\newcommand{\rd}{{\text{\rm d}}}
\DeclareMathOperator{\Exp}{e}
\newcommand{\LIM}{\textsc{Lim}}
\newcommand{\f}{\mathbf{f}}
\newcommand{\bh}{\mathbf{h}}
\newcommand{\bu}{\mathbf{u}}
\newcommand{\bv}{\mathbf{v}}
\newcommand{\bw}{\mathbf{w}}
\newcommand{\bx}{\mathbf{x}}

\newcommand{\bg}{\mathbf{g}}
\newcommand{\mA}{\mathcal{A}}
\newcommand{\mB}{\mathcal{B}}
\newcommand{\mC}{\mathcal{C}}

\newcommand{\mU}{\mathcal{U}}
\newcommand{\mV}{\mathcal{V}}


\begin{document}

\title[A Discrete data assimilation scheme for the 2D NSE and statistics]{A discrete data assimilation scheme for the solutions of the 2D Navier-Stokes equations and their statistics}

\date{\today}

\author[C. Foias] {Ciprian Foias}
\address{ \textnormal{(Ciprian Foias)} Department of Mathematics\\
Texas A\&M University\\ College Station, TX 77843, USA.}
\email[C. Foias]{foias@math.tamu.edu}

\author[C. F. Mondaini]{Cecilia F. Mondaini}
\address{\textnormal{(Cecilia F. Mondaini)} Department of Mathematics\\
Texas A\&M University\\ College Station, TX 77843, USA.}
\email[C. F. Mondaini] {cfmondaini@math.tamu.edu}

\author[E. S. Titi]{Edriss S. Titi}
\address{\textnormal{(Edriss S. Titi)} Department of Mathematics\\
Texas A\&M University\\ College Station, TX 77843, USA. {\bf ALSO}, Department of Computer Science and Applied Mathematics, Weizmann Institute of Science, Rehovot 76100, Israel.}
\email[E. Titi] {titi@math.tamu.edu and edriss.titi@weizmann.ac.il}


\begin{abstract}
We adapt a previously introduced continuous in time data assimilation (downscaling) algorithm for the 2D Navier-Stokes equations to the more realistic case when the measurements are obtained discretely in time and may be contaminated by systematic errors. Our algorithm is designed to work with a general class of observables, such as low Fourier modes and local spatial averages over finite volume elements. Under suitable conditions on the relaxation (nudging) parameter, the spatial mesh resolution and the time step between successive measurements, we obtain an asymptotic in time estimate of the difference between the approximating solution and the unknown reference solution corresponding to the measurements, in an appropriate norm, which shows exponential convergence up to a term which depends on the size of the errors. A stationary statistical analysis of our discrete data assimilation algorithm is also provided.
\end{abstract}

\subjclass[2010]{35Q30, 37C50, 76B75, 93C20}
\keywords{discrete data assimilation, nudging, downscaling, two-dimensional Navier-Stokes equations, stationary statistical analysis, invariant measure}

\maketitle

\section{Introduction}

The idea of data assimilation is to obtain a good approximation of the state of a certain physical system by combining observational data with dynamical principles pertaining to the underlying mathematical model, which allows for a downscaling process. It is widely used in many fields of geosciences, mainly for oceanic and atmospheric forecasting.

One of the approaches to this question consists in the use of feedback control algorithms, which have been considered by many researchers in the past few decades. Among the earlier related works, see, e.g., \cite{Luenberger1971} and references therein, for a study regarding linear time-invariant systems, and also \cite{Nijmeijer2001,Thau1973}, concerning nonlinear systems.

Within this context, the classical method of continuous in time data assimilation (i.e. by employing observational measurements obtained continuously in time) consists in inserting the measurements directly into the model as it is integrated in time \cite{CharneyHalemJastrow1969,Daley1991}. For example, in the case of measurements given by projections onto low Fourier modes, one may explore this idea by introducing the low Fourier mode observables into the equation for the evolution of the high Fourier modes \cite{BrowningHenshawKreiss1998,HaydenOlsonTiti2011,HenshawKreissYstrom2003,Korn2009,OlsonTiti2003,OlsonTiti2008}. However, in the case when the measurements are collected from a discrete set of nodal points, such approach may present some difficulties, since it is not possible to compute the exact values of the spatial derivatives present in the model.

In \cite{Azouanietal2014}, a new algorithm of continuous in time data assimilation was introduced inspired by ideas from control theory \cite{AzouaniTiti2014}. This algorithm was given for the 2D Navier-Stokes equations, but it is in fact applicable to a large class of dissipative evolution equations. In this new approach, instead of inserting the measurements directly into the model, a feedback control term is introduced into the original evolution equation of the system, which forces the coarse spatial scales of the solution of the new model, i.e. the \textit{approximating solution}, towards the coarse spatial scales of the solution of the original system, i.e. the \textit{reference solution}. This type of technique was previously called Newtonian nudging or dynamic relaxation method (see, e.g. \cite{HokeAnthes1976} and references therein), and usually considered in much simpler scenarios. The advantage in this new algorithm is that no derivatives are required of the observational measurements, and thus it 
works for a general class of interpolant operators.

More specifically, consider the 2D incompressible Navier-Stokes equations, which are given by
\be\label{NSEeqs}
 \frac{\partial \bu}{\partial t} - \nu \Delta \bu + (\bu \cdot \nabla) \bu + \nabla p = \f, \quad \nabla \cdot \bu = 0.
\ee
where $\bu = (u_1,u_2)$ and $p$ are the unknowns, and represent the velocity vector field and the pressure, respectively; while $\f$ and $\nu$ are given, and represent the mass density of volume forces applied to the fluid and the kinematic viscosity parameter, respectively. 

In this case, the reference solution is given by a solution $\bu$ of \eqref{NSEeqs}, for which the initial data is missing. In \cite{Azouanietal2014}, the measurements corresponding to this reference solution are assumed to be free of errors, obtained continuously in time and discretely in space, from a mesh in the physical domain with resolution of size $h$. In order to deal with these spatial discrete measurements in the data assimilation algorithm, an interpolation operator in space is considered and denoted by $I_h$. This operator is assumed to satisfy a suitable condition of approximation of identity. Then, the continuous in time data assimilation algorithm introduced in \cite{Azouanietal2014} consists in finding a solution $\bv = \bv(\bx,t)$ of the following problem, for $(\bx,t) \in \Omega \times [t_0,\infty) \subset \mathbb{R}^2 \times \mathbb{R}$:
\be\label{introContDataAssAlg}
\frac{\partial \bv}{\partial t} - \nu \Delta \bv + (\bv \cdot \nabla) \bv + \nabla \tilde{p} = \f  - \beta(I_h(\bv) - I_h(\bu)),
\ee
\be
 \nabla \cdot \bv = 0,
\ee
\be
\bv(t_0) = \bv_0,
\ee
where $\bu = \bu(\bx,t)$ is the reference solution, satisfying \eqref{NSEeqs}; $\tilde{p}$ is a modified pressure; $\beta$ is the relaxation (nudging) parameter; and $\bv_0$ is an arbitrary initial condition. Notice that the measurements are incorporated into the algorithm through the second term on the right-hand side of \eqref{introContDataAssAlg}, the feedback control term, where the coarse spatial scales of $\bv$ are forced towards the coarse spatial scales of $\bu$ with the help of the relaxation parameter $\beta$.

The idea behind the introduction of this feedback term comes from the fact that certain dissipative evolution equations are determined by only a finite number of degrees of freedom, a result that was first rigorously proved by Foias and Prodi \cite{FoiasProdi1967} in the case of low Fourier modes for the 2D Navier-Stokes equations. More specifically, they prove that if the first $N$ Fourier modes of any two solutions tend to zero asymptotically in time, for a sufficiently large $N$, then the difference between the whole two solutions also tends to zero asymptotically in time. Later, similar results were proved for other types of degrees of freedom, such as nodes and local averages over volume elements \cite{FoiasTemam1984,FoiasTiti1991,JonesTiti1992,JonesTiti1993}, and also for a larger class of dissipative evolution equations \cite{CockburnJonesTiti1997}.

This suggests that, under suitable conditions on the associated parameters, forcing the coarse spatial scales of the approximating solution towards those of the reference solution should be enough for making them converge to each other asymptotically in time. Indeed, in \cite{Azouanietal2014} the authors show that, given an arbitrary initial data $\bv_0$ for the approximate model and under suitable conditions on the spatial resolution $h$ and on the coefficient $\beta$ of the feedback term (the relaxation parameter), the approximate solution $\bv$ converges exponentially to the reference solution $\bu$ as time goes to infinity, in an appropriate norm.

Notably, in \cite{AltafTitiKnioZhaoMcCabeHoteit2015,GeshoOlsonTiti2015}, numerical studies of the continuous data assimilation algorithm from \cite{Azouanietal2014} were done, indicating that the algorithm actually performs remarkably well for significantly less restrictive values of the parameters $\beta$ and $h$ than the ones suggested by the analytical estimates in \cite{Azouanietal2014}.

Several other works followed by using the approach from \cite{Azouanietal2014} applied to different systems of equations and also different contexts such as missing some state variables in the observations \cite{AlbanezNussenzveigTiti2014,BessaihOlsonTiti2015, FarhatJollyTiti2015, FarhatLunasinTiti2015-1, FarhatLunasinTiti2015-2, MarkowichTitiTrabelsi2015} (see also \cite{GhilHalemAtlas1978,GhilShkollerYangarber1977} for previous approaches in this subject). However, most of these previous works assumed that the measurements were free of errors and obtained continuously in time. Although, we remark that continuous in time data assimilation with stochastically noisy data was considered in \cite{BessaihOlsonTiti2015} by using the framework from \cite{Azouanietal2014}, and error-free spatio-temporal discrete data assimilation was studied in \cite{HaydenOlsonTiti2011} by using a previous algorithm. Moreover, noisy observations were also considered in \cite{BlomkerLawStuartZygalakis2013,LawShuklaStuart2014} in 
the context of the 3DVAR filtering method.

The purpose of this present work is to adapt the continuous in time data assimilation algorithm introduced in \cite{Azouanietal2014} for the 2D Navier-Stokes equations to the more realistic case when the measurements are obtained discretely in time and may be contaminated by systematic errors. 

In this context, we consider an increasing sequence of instants of time $\{t_n\}_{n\in\mathbb{N}} \subset \mathbb{R}$ at which measurements are taken, such that $t_n \to \infty$. The maximum step size between successive measurements is denoted by a positive constant $\kappa$, so that $|t_{n+1} - t_n| \leq \kappa$, for all $n \in \mathbb{N}$. Also, for each $n \in \mathbb{N}$, we denote by $\eta_n$ the error associated to the measurement at time $t_n$. Moreover, we denote as before by $h$ the resolution of the mesh in the physical domain and consider an interpolation operator in space given by $I_h$. Here, we assume that $I_h$, besides satisfying the condition of approximation of identity, is also a bounded operator with respect to an appropriate norm. Our discrete data assimilation algorithm consists in finding a solution $\bv = \bv(\bx,t)$ of the following problem, for $(\bx,t) \in \Omega \times [t_0,\infty) \subset \mathbb{R}^2 \times \mathbb{R}$:
\be\label{introDiscDataAssAlg}
\frac{\partial \bv}{\partial t} - \nu \Delta \bv + (\bv \cdot \nabla) \bv + \nabla q = \f  - \beta \sum_{n=0}^\infty (I_h(\bv(t_n)) - I_h(\bu(t_n))) \chi_n  + \beta \sum_{n=0}^{\infty} \eta_n \chi_n,
\ee
\be
 \nabla \cdot \bv = 0,
\ee
\be
\bv(t_0) = \bv_0,
\ee
where $\bu = \bu(\bx,t)$ is the reference solution, satisfying \eqref{NSEeqs}; $q$ is a modified pressure; $\beta$ is the relaxation parameter; $\chi_n$ is the characteristic function of the interval $[t_n, t_{n+1})$; and $\bv_0$ is an arbitrary initial data. 

Notice that the difference between \eqref{introContDataAssAlg} and \eqref{introDiscDataAssAlg} relies on the interpolation in time that is done in the feedback term through the characteristic functions $\chi_n$, and also on the additional last term on the right-hand side of \eqref{introDiscDataAssAlg}, which is due to the errors. In one of our main results (Theorem \ref{thmasympestwGeneral}), we show that, under suitable conditions on $\beta$, $\kappa$ and $h$, the asymptotic in time limit of the difference between the approximate solution $\bv$ satisfying \eqref{introDiscDataAssAlg} and the reference solution $\bu$ satisfying \eqref{NSEeqs} (corresponding to the error-contaminated measurements $I_h(\bu(t_n)) + \eta_n$, $n\in \mathbb{N}$) is bounded above by the maximum size of the errors multiplied by an absolute constant. In particular, our result shows that there is no accumulation of errors in time. Also, in the particular case of error-free measurements, we have exponential convergence of $\bv$ towards 
$\bu$, an analogous result to the one obtained in \cite{Azouanietal2014}.

Moreover, we also study the stationary statistical behavior of our algorithm. This is motivated by the fact that most applications of data assimilation involve fully developed turbulent flows, which display a regular statistical behavior as opposed to the highly unpredictable behavior of instantaneous values. Thus, it is common for experimentalists to consider averages (with respect to time, space or to an ensemble of experiments) of the characteristic physical quantities associated to the flow, such as energy, enstrophy, energy dissipation rate, etc., and also their correlations in space or time.

For this statistics part, assuming again the same hypotheses of the result for individual solutions, we obtain in Theorem \ref{thmasympestdifftimeav} an asymptotic in time estimate of the difference between time averages of characteristic physical quantities associated to the approximating solution and the reference solution, given by a superior bound which depends again on the size of the errors. Also, again under the same hypotheses, but considering the particular case in which the measurements are error-free, we show in Theorem \ref{thmEqualityTimeAvEnsembleAv} that the limit of time averages associated to the approximating solution coincide with the ensemble averages, with respect to a certain invariant measure, associated to the reference solution.
 
Therefore, these results allow one to obtain information on averages of physical quantities associated to the unknown reference solution through averages of the same quantities associated to the approximating solution, which can be effectively computed through our discrete data assimilation algorithm.
 
We remark that, although we consider here a discrete in time data assimilation algorithm for the 2D Navier-Stokes equations, similar ideas can also be applied to other dissipative systems, in the same way that the continuous in time data assimilation algorithm from \cite{Azouanietal2014} was extended to other equations, such as the B\'enard convection model \cite{FarhatJollyTiti2015,FarhatLunasinTiti2015-2}, the Navier-Stokes-$\alpha$ equations \cite{AlbanezNussenzveigTiti2014} and the Brinkman-Forchheimer-Extended Darcy model \cite{MarkowichTitiTrabelsi2015}.
 
This paper is organized as follows. In section \ref{sec2DNSESetting}, we review some of the mathematical setting related to the 2D incompressible Navier-Stokes equations. In section \ref{secDiscDataAssAlg}, we give further details about our discrete data assimilation algorithm and prove existence and uniqueness of weak and strong solutions. In section \ref{secAsympAnalysis}, we analyze the asymptotic behavior in time of individual solutions of our algorithm and divide it into two subsections: first, we consider the particular case of an interpolant operator given by the projection onto low Fourier modes; later, we consider the case of a general interpolant operator. The reason for considering the particular case of Fourier modes separately is that it provides more freedom in the calculations and yields slightly less strict conditions on the parameters. In section \ref{secStatAnal}, we analyze the stationary statistical behavior of our algorithm. Finally, in the Appendix, we consider the particular example of 
interpolant operator given by the sum of averages over finite volume elements, and show explicit uniform bounds of the sequence of errors $\{\eta_n\}_{n\in\mathbb{N}}$ in the $L^2(\Omega)^2$ and $H^1(\Omega)^2$ norms, with respect to given parameters.
 
\section{Mathematical setting of the 2D Navier-Stokes equations}\label{sec2DNSESetting}

In this section we provide a brief overview of the background material on the two-dimensional incompressible Navier-Stokes equations, given in \eqref{NSEeqs}. For a more detailed discussion, the reader is referred to, e.g., \cite{bookcf1988,FMRT2001,Temambook2001}. 

We denote the space variable by $\bx = (x_1,x_2)$, which varies in a domain $\Omega \subset \mathbb{R}^2$. Also, the time variable is denoted by $t$ and varies in the interval $[t_0,\infty)$, with $t_0$ representing the initial time. For simplicity, we assume that the forcing term $\f$ is time-independent and satisfies $\f \in L^2(\Omega)^2$. Notably, similar relevant results are also valid if $\f \in L^{\infty}(t_0,\infty;L^2(\Omega)^2)$.

We endow system \eqref{NSEeqs} with either periodic or no-slip boundary conditions. In the periodic case, we consider $\Omega = (0,L) \times (0,L)$ and assume that the flow is periodic with period $L$ in each spatial direction $x_i$, $i=1,2$. Moreover, we assume that
\[
 \int_{\Omega} \f(\bx) \rd \bx = 0.
\]

In the no-slip case, we consider $\Omega$ as a bounded subset of $\mathbb{R}^2$ with sufficiently smooth boundary $\partial \Omega$ and assume that $\bu = 0$ on $\partial \Omega$.

With respect to each type of boundary condition, we have a different space of test functions, denoted by $\mV$. In the periodic case, we consider $\mV$ as the set of all $L$-periodic trigonometric polynomials from $\mathbb{R}^2$ to $\mathbb{R}^2$ that are divergence free and have zero average. In the no-slip case, we consider $\mV$ as the family of divergence free and compactly supported $C^\infty$ vector fields defined on $\Omega$ with values in $\mathbb{R}^2$. 



We denote by $H$ the closure of $\mV$ with respect to the norm in $L^2(\Omega)^2$, and by $V$ the closure of $\mV$ under the $H^1(\Omega)^2$ norm. Also, we denote by $H'$ and $V'$ the dual spaces of $H$ and $V$, respectively. We identify $H$ with its dual, so that we obtain $V \subseteq H  \subseteq V'$, with the injections being continuous, compact and each space dense in the following one. The duality product between $V$ and $V'$ is denoted by $\langle \cdot, \cdot \rangle_{V',V}$.

We consider the same notation from \cite{FMRT2001} and denote the inner products in $H$ and $V$ by $( \cdot,\cdot)_{L^2} $ and $(\!( \cdot, \cdot )\!)_{H^1}$, respectively. They are defined as
\[
 (\bu,\bv)_{L^2} = \int_{\Omega} \bu(\bx) \cdot \bv(\bx) \rd \bx, \quad \forall \bu,\bv \in H,
\]
\[
 (\!(\bu,\bv)\!)_{H^1} = \int_{\Omega} \sum_{i=1}^2  \frac{\partial \bu}{\partial \bx_i} \cdot \frac{\partial \bv}{\partial x_i} \rd \bx, \quad \forall \bu,\bv \in V,
\]
and the associated norms are given by $|\bu|_{L^2} = (\bu,\bu)^{1/2}_{L^2}$, $\|\bu\|_{H_1} = (\!(\bu,\bu)\!)^{1/2}_{H^1}$. 

The fact that $\|\cdot\|_{H^1}$ defines a norm in $V$ is justified via the Poincar\'e inequality, given by
\be\label{ineqPoincare}
 \lambda_1^{1/2}|\bu|_{L^2} \leq \|\bu\|_{H^1}, \quad \forall \bu \in V,
\ee
where $\lambda_1$ is the first eigenvalue of the Stokes operator, defined in \eqref{defStokesop} below.

Given any $R>0$, we denote by $B_H(R)$ and $B_V(R)$ the closed balls centered at $0$ with radius $R$ in $H$ and $V$, respectively.

Let $P_{\sigma}$ be the Leray-Helmholtz projector, which is defined as the orthogonal projection from $L^2(\Omega)^2$ onto $H$. Then, applying $P_{\sigma}$ to system \eqref{NSEeqs}, we obtain the following functional equation, which is equivalent to \eqref{NSEeqs},

\be\label{functionaleq}
 \frac{\rd \bu}{\rd t} + \nu A\bu + B(\bu,\bu) = \bg \quad \mbox{in $V'$},
\ee
where $\bg = P_{\sigma} \f \in H$, $B:V\times V \rightarrow V'$ is the bilinear operator defined as the continuous extension of the operator given by
\[
 B(\bu,\bv) = P_{\sigma}((\bu \cdot \nabla)\bv), \quad \forall \bu, \bv \in \mV,
\]
and $A: D(A) \subseteq V \rightarrow V'$ is the Stokes operator, defined as the continuous extension of
\be\label{defStokesop}
 A\bu = -P_{\sigma} \Delta \bu, \quad \forall \bu \in \mV,
\ee
with the domain of $A$, $D(A)$, given by $V\cap H^2(\Omega)^2$.

The Stokes operator is a positive and self-adjoint operator with compact inverse. Therefore, it admits an orthonormal basis of eigenvectors $\{\bw_m\}_{m\in \mathbb{N}}$ associated with a nondecreasing sequence of positive eigenvalues $\{\lambda_m\}_{m\in\mathbb{N}}$, with $\lambda_m \to \infty$ as $m \to \infty$.

We also consider, for each $m \in \mathbb{N}$, the low modes projector $P_m$, which is defined as the orthogonal projector of $H$ onto the subspace $H_m = \textnormal{span}\{ \bw_1,\ldots,\bw_m \}$. Moreover, we denote $Q_m = I - P_m$.

The bilinear operator $B$ satisfies the following property:
\be\label{PropBilTerm}
 \langle B(\bu_1,\bu_2), \bu_3\rangle_{V',V} = -\langle B(\bu_1,\bu_3), \bu_2 \rangle_{V',V}, \quad \forall \bu_1,\bu_2, \bu_3 \in V.
\ee

We recall some well-known inequalities which are valid in two dimensions, namely, the Ladyzhenskaya inequality,
\be\label{ineqLadyzhenskaya}
 \|\bu \|_{L^4} \leq c_L |\bu|_{L^2}^{1/2} \|\bu\|_{H^1}^{1/2}, \quad \forall \bu \in V,
\ee
and the Br\'ezis-Gallouet inequality \cite{BrezisGallouet1980,FoiasManleyTemamTreve1983},
\be\label{ineqBrezisGallouet}
\|\bu\|_{L^\infty} \leq c_B \|\bu\|_{H^1} \left( 1 + \log\left( \frac{|A\bu|_{L^2}}{\lambda_1^{1/2} \|\bu\|_{H^1}} \right) \right)^{1/2}, \quad \forall \bu \in D(A).
\ee
Here, $c_L$ and $c_B$ denote nondimensional (scale invariant) constants, and $\|\cdot\|_{L^4}$ and $\|\cdot\|_{L^\infty}$ denote the usual norms in the Lebesgue spaces $L^4(\Omega)^2$ and $L^\infty(\Omega)^2$, respectively.

Property \eqref{PropBilTerm} and the Ladyzhenskaya inequality, \eqref{ineqLadyzhenskaya}, imply the following inequality for the bilinear term:
\be\label{estnonlineartermInnerProd}
 |\langle B(\bu_1,\bu_2),\bu_3\rangle_{V',V}| \leq c_L^2 |\bu_1|^{1/2}_{L^2}\|\bu_1\|^{1/2}_{H^1}|\bu_2|^{1/2}_{L^2}\|\bu_2\|^{1/2}_{H^1} \|\bu_3\|_{H^1}, \, \forall \bu_1,\bu_2, \bu_3 \in V,
\ee
from which it follows that 
\be\label{estnonlineartermV'}
 \|B(\bu_1,\bu_2)\|_{V'} \leq c_L^2 |\bu_1|^{1/2}_{L^2}\|\bu_1\|^{1/2}_{H^1}|\bu_2|^{1/2}_{L^2}\|\bu_2\|^{1/2}_{H^1}, \quad \forall \bu_1,\bu_2 \in V.
\ee

Also,
\begin{multline}\label{estnonlineartermL4L4L2}
 |(B(\bu_1,\bu_2),\bu_3)_{L^2}| \leq c_L^2 |\bu_1|_{L^2}^{1/2} \|\bu_1\|_{H^1}^{1/2} \|\bu_2\|_{H^1}^{1/2} |A\bu_2|_{L^2}^{1/2} |\bu_3|_{L^2}, \\
 \forall \bu_1 \in V, \forall \bu_2 \in D(A), \forall \bu_3 \in H,
\end{multline}
which implies that
\be\label{estnonlineartermHL4L4L2}
 |B(\bu_1,\bu_2)|_{L^2} \leq c_L^2 |\bu_1|_{L^2}^{1/2} \|\bu_1\|_{H^1}^{1/2} \|\bu_2\|_{H^1}^{1/2} |A\bu_2|_{L^2}^{1/2}, \quad \forall \bu_1 \in V, \forall \bu_2 \in D(A).
\ee

Moreover, from Br\'ezis-Gallouet inequality, \eqref{ineqBrezisGallouet}, it follows that 
\begin{multline}\label{estnonlineartermH}
 |(B(\bu_1,\bu_2),\bu_3)_{L^2}| \leq c_B \|\bu_1\|_{H^1} \left( 1 + \log\left( \frac{|A\bu_1|_{L^2}}{\lambda_1^{1/2} \|\bu_1\|_{H^1}} \right) \right)^{1/2} \|\bu_2\|_{H^1} |\bu_3|_{L^2}, \\
 \forall \bu_1 \in D(A), \forall \bu_2 \in V, \forall \bu_3 \in H,
\end{multline}
so that
\begin{multline}\label{estnonlineartermHfunc}
  |B(\bu_1,\bu_2)|_{L^2} \leq c_B \|\bu_1\|_{H^1} \|\bu_2\|_{H^1} \left( 1 + \log\left( \frac{|A\bu_1|_{L^2}}{\lambda_1^{1/2} \|\bu_1\|_{H^1}} \right) \right)^{1/2},\\  \forall \bu_1 \in D(A), \forall \bu_2 \in V.
\end{multline}

Moreover, we recall the following logarithmic inequalities for the bilinear term which were proved in \cite{Titi1987}:
\begin{multline}\label{ineqTiti1}
|(B(\bu_1,\bu_2),A\bu_3)_{L^2}| \leq c_T \|\bu_1\|_{H^1} \|\bu_2\|_{H^1} |A \bu_3|_{L^2} \left( 1 + \log\left( \frac{|A \bu_2|_{L^2}}{\lambda_1^{1/2}\|\bu_2\|_{H^1}}\right) \right)^{1/2}, \\
\forall \bu_1 \in V, \forall \bu_2, \bu_3 \in D(A),
\end{multline}
\begin{multline}\label{ineqTiti2}
|(B(\bu_1,\bu_2),A\bu_3)_{L^2}| \leq c_T \|\bu_1\|_{H^1} \|\bu_2\|_{H^1} |A \bu_3|_{L^2} \left( 1 + \log\left( \frac{|A \bu_1|_{L^2}}{\lambda_1^{1/2}\|\bu_1\|_{H^1}}\right) \right)^{1/2}, \\ \forall \bu_1, \bu_3 \in D(A), \forall \bu_2 \in V,
\end{multline}
where $c_T$ is a nondimensional constant.

It is well-known that given an initial data $\bu_0 \in H$ there exists a unique weak solution of \eqref{functionaleq} defined on $[t_0,\infty)$ and satisfying $\bu(t_0) = \bu_0$. Moreover, if $\bu_0 \in V$, then it is also true that there exists a unique strong solution of \eqref{functionaleq} defined on $[t_0,\infty)$ satisfying this initial condition. For completeness, we state these results below, where the notions of weak and strong solutions are also made more precise. 


\begin{thm}[Existence and uniqueness of weak solutions]\label{thmexistuniqweaksolNSE}
 Let $\bu_0 \in H$. Then, there exists a unique (weak) solution $\bu$ of \eqref{functionaleq} satisfying $\bu(t_0) = \bu_0$ and 
 \[
  \bu \in  C([t_0,\infty);H) \cap L^2_{\textnormal{loc}}(t_0,\infty;V), \,\, \frac{\rd \bu}{\rd t} \in L^2_{\textnormal{loc}}(t_0,\infty;V').
 \]
 Moreover, $\bu$ depends continuously on the initial data $\bu_0$.
\end{thm}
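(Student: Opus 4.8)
The plan is to establish existence, uniqueness, and continuous dependence for the 2D Navier--Stokes functional equation \eqref{functionaleq} via the classical Galerkin approximation scheme. First I would fix $\bu_0 \in H$ and, for each $m \in \mathbb{N}$, project \eqref{functionaleq} onto the finite-dimensional space $H_m$ using $P_m$, seeking $\bu_m(t) \in H_m$ solving $\frac{\rd \bu_m}{\rd t} + \nu A \bu_m + P_m B(\bu_m, \bu_m) = P_m \bg$ with $\bu_m(t_0) = P_m \bu_0$. Since this is a system of ODEs with a locally Lipschitz (quadratic) nonlinearity, standard ODE theory yields a unique local solution; the a priori estimates below will then show it does not blow up, giving a global solution on $[t_0,\infty)$.

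Next I would derive the fundamental a priori bounds by testing against $\bu_m$. Taking the $L^2$ inner product with $\bu_m$ and using the antisymmetry property \eqref{PropBilTerm}, which kills the nonlinear term, gives the energy identity $\frac{1}{2}\frac{\rd}{\rd t}|\bu_m|_{L^2}^2 + \nu \|\bu_m\|_{H^1}^2 = (\bg, \bu_m)_{L^2}$. Bounding the right-hand side by Cauchy--Schwarz and the Poincar\'e inequality \eqref{ineqPoincare}, then applying Young's inequality, yields a uniform bound on $\bu_m$ in $L^\infty(t_0,T;H) \cap L^2(t_0,T;V)$ for every $T > t_0$, with constants independent of $m$. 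Using the estimate \eqref{estnonlineartermV'} on the nonlinear term, I would then bound $\frac{\rd \bu_m}{\rd t}$ uniformly in $L^2(t_0,T;V')$. These bounds are exactly what close the global existence for each Galerkin level.

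From the uniform bounds, Banach--Alaoglu provides a subsequence converging weakly-$*$ in $L^\infty(t_0,T;H)$, weakly in $L^2(t_0,T;V)$, with derivatives converging weakly in $L^2(t_0,T;V')$. The Aubin--Lions compactness lemma, using $V \hookrightarrow\hookrightarrow H \hookrightarrow V'$, upgrades this to strong convergence in $L^2(t_0,T;H)$, which is precisely what is needed to pass to the limit in the quadratic nonlinear term. Passing to the limit in the weak formulation produces a weak solution $\bu$; the regularity $\bu \in C([t_0,\infty);H)$ then follows from $\bu \in L^2_{\mathrm{loc}}(t_0,\infty;V)$ together with $\frac{\rd \bu}{\rd t} \in L^2_{\mathrm{loc}}(t_0,\infty;V')$ by the standard Lions--Magenes interpolation lemma.

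For uniqueness and continuous dependence, which I expect to be the main obstacle and the step where the two-dimensionality is genuinely used, I would take two solutions $\bu, \tilde{\bu}$ and set $\bw = \bu - \tilde{\bu}$. Subtracting the equations and testing with $\bw$, the key difficulty is controlling the difference of the nonlinear terms, which reduces to estimating $|\langle B(\bw, \bu), \bw \rangle_{V',V}|$. Here I would invoke the Ladyzhenskaya-type bound \eqref{estnonlineartermInnerProd}, followed by Young's inequality, to absorb a factor of $\nu \|\bw\|_{H^1}^2$ into the dissipation and obtain a differential inequality of the form $\frac{\rd}{\rd t}|\bw|_{L^2}^2 \leq c\,\|\bu\|_{H^1}^2 |\bw|_{L^2}^2$. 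Since $\|\bu\|_{H^1}^2 \in L^1_{\mathrm{loc}}$, Gr\"onwall's inequality then yields $|\bw(t)|_{L^2}^2 \leq |\bw(t_0)|_{L^2}^2 \exp\left( c \int_{t_0}^t \|\bu(s)\|_{H^1}^2 \,\rd s \right)$, giving uniqueness when $\bw(t_0) = 0$ and continuous dependence on the initial data in general.
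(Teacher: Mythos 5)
Your proposal is correct, and it is precisely the classical Galerkin--compactness--energy argument: the paper does not prove this theorem itself, but states it as a well-known result and defers to the standard references (\cite{bookcf1988,FMRT2001,Temambook2001}), where exactly this proof appears. In particular, your uniqueness step --- combining \eqref{PropBilTerm}, \eqref{estnonlineartermInnerProd}, Young's inequality, and Gr\"onwall --- is the standard two-dimensional argument that the paper itself reuses later for the data assimilation equation (cf.\ estimate \eqref{estA} in the proof of Theorem \ref{thmasympestwFourier}).
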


\begin{thm}[Existence and uniqueness of strong solutions]\label{thmexistuniqstrongsolNSE}
 Let $\bu_0 \in V$. Then, there exists a unique (strong) solution $\bu$ of \eqref{functionaleq} satisfying $\bu(t_0) = \bu_0$ and 
 \[
  \bu \in  C([t_0,\infty);V) \cap L^2_{\textnormal{loc}}(t_0,\infty;D(A)), \,\, \frac{\rd \bu}{\rd t} \in L^2_{\textnormal{loc}}(t_0,\infty;H).
 \]
 Moreover, $\bu$ depends continuously on the initial data $\bu_0$.
\end{thm}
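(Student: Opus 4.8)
The plan is to follow the classical Galerkin scheme, upgrading the a priori estimates from the $H$-level to the $V$-level, which in two dimensions is enough to obtain a global strong solution. First I would fix $m$ and seek the Galerkin approximation $\bu_m(t) \in H_m$ solving the projected system $\frac{\rd \bu_m}{\rd t} + \nu A\bu_m + P_m B(\bu_m,\bu_m) = P_m\bg$ with $\bu_m(t_0) = P_m\bu_0$. Since $P_m B$ is locally Lipschitz on the finite-dimensional space $H_m$, standard ODE theory yields a unique local-in-time solution; global existence for each fixed $m$ then follows once the a priori bounds below are in place.

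The heart of the argument is the energy estimate at the level of $V$, and this is precisely where the two-dimensional structure is indispensable (in three dimensions the analogous bound fails and global existence of strong solutions is open). Taking the inner product of the Galerkin system with $A\bu_m$ gives
\[
\tfrac{1}{2}\tfrac{\rd}{\rd t}\|\bu_m\|_{H^1}^2 + \nu|A\bu_m|_{L^2}^2 = -(B(\bu_m,\bu_m),A\bu_m)_{L^2} + (\bg,A\bu_m)_{L^2}.
\]
Using \eqref{estnonlineartermHL4L4L2} with $\bu_1=\bu_2=\bu_m$ I would bound the nonlinear term by $c_L^2 |\bu_m|_{L^2}^{1/2}\|\bu_m\|_{H^1}|A\bu_m|_{L^2}^{3/2}$, and then invoke Young's inequality to absorb $|A\bu_m|_{L^2}^{3/2}$ into $\tfrac{\nu}{2}|A\bu_m|_{L^2}^2$, at the cost of a term proportional to $|\bu_m|_{L^2}^2\|\bu_m\|_{H^1}^4$; the forcing term is handled the same way. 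This produces a differential inequality of the form $\tfrac{\rd}{\rd t}\|\bu_m\|_{H^1}^2 + \nu|A\bu_m|_{L^2}^2 \leq C\,|\bu_m|_{L^2}^2\|\bu_m\|_{H^1}^4 + C|\bg|_{L^2}^2$. To close it I would first derive the standard $H$-level estimate by testing with $\bu_m$ (using \eqref{PropBilTerm} to annihilate the nonlinearity), which controls $|\bu_m(t)|_{L^2}$ and $\int_{t_0}^t\|\bu_m(s)\|_{H^1}^2\,\rd s$ uniformly in $m$ on each finite interval; feeding these into the uniform Gronwall lemma then yields bounds on $\|\bu_m\|_{H^1}$ in $L^\infty_{\mathrm{loc}}$ and on $|A\bu_m|_{L^2}$ in $L^2_{\mathrm{loc}}$, both uniform in $m$.

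With the uniform bounds in hand, the remaining steps are routine. They give, along a subsequence, weak-$*$ convergence of $\bu_m$ in $L^\infty_{\mathrm{loc}}(t_0,\infty;V)$, weak convergence in $L^2_{\mathrm{loc}}(t_0,\infty;D(A))$, and, via the Aubin--Lions compactness lemma, strong convergence in $L^2_{\mathrm{loc}}(t_0,\infty;V)$; the last of these is what lets me pass to the limit in the nonlinear term and identify the limit $\bu$ as a solution of \eqref{functionaleq}. The membership $\tfrac{\rd\bu}{\rd t}\in L^2_{\mathrm{loc}}(t_0,\infty;H)$ follows by reading off the equation, and the continuity $\bu\in C([t_0,\infty);V)$ from the standard interpolation lemma for functions lying in $L^2(D(A))$ with derivative in $L^2(H)$. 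Finally, for uniqueness and continuous dependence I would take two strong solutions $\bu,\bv$, set $\bw=\bu-\bv$, and test the difference equation with $\bw$; writing the nonlinear difference as $B(\bw,\bu)+B(\bv,\bw)$ and estimating it with \eqref{estnonlineartermInnerProd} (using \eqref{PropBilTerm} to discard the term $(B(\bv,\bw),\bw)_{L^2}$), I obtain $\tfrac{\rd}{\rd t}|\bw|_{L^2}^2 \leq C(t)|\bw|_{L^2}^2$ with $C(t)$ locally integrable thanks to the $V$-bounds already established, so that Gronwall's inequality delivers both uniqueness and the Lipschitz dependence on the initial data.
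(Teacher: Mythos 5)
Your proposal is correct: the paper itself does not prove this theorem, stating it as classical and citing \cite{bookcf1988,FMRT2001,Temambook2001}, and your Galerkin argument---$H$-level estimate, $V$-level estimate via testing with $A\bu_m$ and the two-dimensional bound \eqref{estnonlineartermHL4L4L2}, Aubin--Lions compactness, and Gronwall for uniqueness---is precisely the standard proof given in those references. The only cosmetic points are that Aubin--Lions also requires the (immediate) uniform bound on $\rd\bu_m/\rd t$ in $L^2_{\textnormal{loc}}(t_0,\infty;H)$ before passing to the limit, and that your Gronwall argument gives continuous dependence in the $H$-norm, which is the form usually meant here.
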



Theorem \ref{thmexistuniqweaksolNSE} implies that, for each $t \in I = [t_0, \infty)$, we have a well-defined operator $S(t): H \rightarrow H$, given by
\be\label{defsemigroup}
 S(t)\bu_0 = \bu(t), \quad \forall \bu_0 \in H,
\ee
where $\bu(t)$ is the value at time $t$ of the unique (weak) solution of \eqref{NSEeqs} satisfying $\bu(t_0) = \bu_0$. The family of operators $\{S(t)\}_{t\geq t_0}$ is a semigroup for system \eqref{NSEeqs}.

We say that $\mB \subset H$ is a bounded \textit{absorbing} set if it is a bounded set with the property that, for every bounded subset $B \subset H$, there exists a time $T = T(B)$ for which $S(t)B \subset \mB$, for all $t \geq T$. The existence of a bounded absorbing set with respect to $\{S(t)\}_{t \geq t_0}$ was first obtained in \cite{FoiasProdi1967}.

Given a bounded absorbing set $\mB \subset H$, the global attractor $\mA$ of \eqref{functionaleq} is defined as the $\omega$-limit set of $\mB$ or, equivalently,
\[
 \mA = \bigcap_{t \geq t_0} S(t) \mB.
\]

The global attractor is a compact and invariant subset of $H$, which means that $S(t)\mA \subset \mA$, for all $t \geq t_0$. In other words, given $\bu_0 \in \mA$, the unique solution $\bu$ of \eqref{functionaleq} defined on $[t_0, \infty)$ and satisfying $\bu(t_0) = \bu_0$, remains in the global attractor for all later time, i.e. $\bu(t) \in \mA$, for all $t \geq t_0$. Moreover, $\mA$ is also a bounded subset of $V$, which implies, by Theorem \ref{thmexistuniqstrongsolNSE} and the invariance of $\mA$, that any trajectory in $\mA$ is a strong solution.

We recall the definition of the Grashof number, which is the nondimensional quantity given by
\[  G = \frac{|\bg|_{L^2}}{\nu^2\lambda_1}.
\]

In the next proposition we recall some uniform bounds of the attractor with respect to the $H$ and $V$ norms. The bounds are given in terms of the Grashof number $G$. For the proofs, we refer to any of the references listed above (\cite{bookcf1988,FMRT2001,Temambook2001}).


In the statement below and in the remainder of this work, we denote by $c$ a generic absolute constant, whose value may change from line to line.

\begin{prop}\label{propunifbounds}
 For every $\bu \in \mA$, the following hold:
 \begin{enumerate}[(i)]
  \item In the case of periodic boundary conditions,
  \be |\bu|_{L^2} \leq \nu G, \quad \|\bu\|_{H^1} \leq \nu \lambda_1^{1/2} G,
  \ee
  \item In the case of no-slip boundary conditions,
  \be |\bu|_{L^2} \leq \nu G, \quad \|\bu\|_{H^1} \leq c \nu \lambda_1^{1/2} G \Exp^{\frac{G^4}{2}},
  \ee
 \end{enumerate}
\end{prop}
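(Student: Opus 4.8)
The goal is to establish uniform bounds for $|\bu|_{L^2}$ and $\|\bu\|_{H^1}$ on the global attractor $\mA$, expressed in terms of the Grashof number $G$. The natural strategy is the classical energy/enstrophy method: derive differential inequalities for $|\bu|_{L^2}^2$ and $\|\bu\|_{H^1}^2$ along solutions, apply Gronwall-type arguments to obtain eventual (absorbing) bounds, and then invoke the invariance of $\mA$ to upgrade these to pointwise bounds holding for every $\bu \in \mA$.

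First I would take the inner product of the functional equation \eqref{functionaleq} with $\bu$ in $H$. By property \eqref{PropBilTerm}, the bilinear term $\langle B(\bu,\bu),\bu\rangle_{V',V}$ vanishes, leaving
\be\label{eqenergyplan}
\frac{1}{2}\frac{\rd}{\rd t}|\bu|_{L^2}^2 + \nu\|\bu\|_{H^1}^2 = (\bg,\bu)_{L^2}.
\ee
Bounding the right-hand side by Cauchy--Schwarz and the Poincar\'e inequality \eqref{ineqPoincare}, then absorbing, gives a differential inequality of the form $\frac{\rd}{\rd t}|\bu|_{L^2}^2 + \nu\lambda_1|\bu|_{L^2}^2 \leq |\bg|_{L^2}^2/(\nu\lambda_1)$. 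Gronwall's inequality then yields the eventual bound $\limsup_{t\to\infty}|\bu|_{L^2}^2 \leq |\bg|_{L^2}^2/(\nu\lambda_1)^2 = \nu^2 G^2$, which by invariance gives $|\bu|_{L^2}\leq \nu G$ on $\mA$. This $L^2$ bound is uniform and identical in both boundary-condition cases.

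For the $H^1$ (enstrophy) bound I would take the inner product of \eqref{functionaleq} with $A\bu$. The key distinction between the two cases enters here through the bilinear term $(B(\bu,\bu),A\bu)_{L^2}$. In the periodic case this term vanishes (the classical identity for periodic boundary conditions), so the enstrophy equation reduces to a linear differential inequality analogous to the energy case, producing the clean bound $\|\bu\|_{H^1}\leq \nu\lambda_1^{1/2}G$. In the no-slip case $(B(\bu,\bu),A\bu)_{L^2}$ does not vanish, and this is where the main obstacle lies: one must control it using a logarithmic estimate such as \eqref{ineqTiti1} or \eqref{ineqTiti2}, together with the already-established $L^2$ bound and a uniform $L^2_{\textnormal{loc}}$ bound on $\|\bu\|_{H^1}$ obtained by time-integrating \eqref{eqenergyplan}. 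The logarithmic dependence on $|A\bu|_{L^2}/(\lambda_1^{1/2}\|\bu\|_{H^1})$ is precisely what forces the enstrophy Gronwall argument to close only after exponentiation, yielding the factor $\Exp^{G^4/2}$; handling this nonlinear feedback carefully is the delicate part.

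Finally, I would combine the uniform-in-time integrated bound on $\|\bu\|_{H^1}^2$ with the pointwise differential inequality via the uniform Gronwall lemma to extract the eventual $H^1$ bound, and then use invariance of $\mA$ once more to conclude that the bound holds for every $\bu\in\mA$. Since all steps rely only on the inequalities \eqref{ineqPoincare}, \eqref{PropBilTerm}, \eqref{ineqTiti1}--\eqref{ineqTiti2} and the existence of an absorbing set cited from \cite{FoiasProdi1967}, the proof is essentially a recollection of standard estimates; I expect the author simply to refer to \cite{bookcf1988,FMRT2001,Temambook2001} rather than reproduce the computation, as indicated in the statement.
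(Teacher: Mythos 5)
Your overall architecture (energy and enstrophy differential inequalities, Gronwall/uniform-Gronwall arguments, then invariance of $\mA$ to upgrade eventual bounds to pointwise bounds) is exactly the standard argument; note the paper itself offers no proof at all, only the citation to \cite{bookcf1988,FMRT2001,Temambook2001}, so the comparison is with those references. Your treatment of the $L^2$ bound and of the periodic $H^1$ bound (using the identity $(B(\bu,\bu),A\bu)_{L^2}=0$, valid in the 2D periodic case) is correct.

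The gap is in your no-slip enstrophy step. The standard proof does \emph{not} use the logarithmic inequalities \eqref{ineqTiti1}--\eqref{ineqTiti2}; it uses the Ladyzhenskaya-type bound, which follows from \eqref{estnonlineartermL4L4L2} with $\bu_1=\bu_2=\bu$, $\bu_3=A\bu$:
\[
 |(B(\bu,\bu),A\bu)_{L^2}| \leq c_L^2\, |\bu|_{L^2}^{1/2}\,\|\bu\|_{H^1}\,|A\bu|_{L^2}^{3/2}
 \leq \frac{\nu}{2}|A\bu|_{L^2}^2 + \frac{c}{\nu^3}\,|\bu|_{L^2}^2\,\|\bu\|_{H^1}^4,
\]
so the enstrophy inequality takes the form
$\frac{\rd}{\rd t}\|\bu\|_{H^1}^2 \leq \frac{c}{\nu}|\bg|_{L^2}^2 + \bigl(\frac{c}{\nu^3}|\bu|_{L^2}^2\|\bu\|_{H^1}^2\bigr)\|\bu\|_{H^1}^2$,
and Gronwall's integrating factor is $\exp\bigl(\frac{c}{\nu^3}\int |\bu|_{L^2}^2\|\bu\|_{H^1}^2\,\rd s\bigr)$. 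On the attractor, $|\bu|_{L^2}\leq \nu G$ and the energy equality give $\int_t^{t+T}\|\bu\|_{H^1}^2\,\rd s \leq c\,\nu G^2$ over a window $T=(\nu\lambda_1)^{-1}$, so the exponent is bounded by $cG^4$: this, and not any logarithmic correction, is the source of the factor $\Exp^{G^4/2}$. If instead you estimate the trilinear term by \eqref{ineqTiti1}/\eqref{ineqTiti2} and absorb $|A\bu|_{L^2}\bigl(1+\log r\bigr)^{1/2}$ into the dissipation (with $r=|A\bu|_{L^2}/(\lambda_1^{1/2}\|\bu\|_{H^1})$), the leftover term is of the form $\frac{c}{\nu}\|\bu\|_{H^1}^4\bigl(1+\log_+\bigl(\|\bu\|_{H^1}^2/(\nu^2\lambda_1)\bigr)\bigr)$: the unknown now sits inside the logarithm, and the uniform Gronwall lemma no longer closes with only $\int\|\bu\|_{H^1}^2$ under control (you would need $\int\|\bu\|_{H^1}^2\log_+\|\bu\|_{H^1}^2$, which is not available a priori). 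So, as sketched, your no-slip argument does not close; replacing the logarithmic estimates by the Ladyzhenskaya bound repairs it and yields the stated constant.
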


In order to simplify the notation, we  will represent the uniform bounds from Proposition \ref{propunifbounds} by writing
\be\label{unifboundsattractor}
|\bu|_{L^2} \leq M_0, \quad \|\bu\|_{H^1} \leq M_1, \quad \forall \bu \in \mA,
\ee
where $M_0$ and $M_1$ are dimensional constants depending on the Grashof number $G$, and whose values vary according to the boundary condition being considered, periodic or no-slip. Note, however, that $M_0 = \nu G$ in both cases.

\section{Discrete Data Assimilation Algorithm}\label{secDiscDataAssAlg}

As it was already pointed out in the Introduction, the purpose of our work is to consider discrete measurements in space and time, which may also be contaminated by errors, and to construct a data assimilation algorithm in order to recover asymptotically in time the exact reference solution, satisfying the 2D Navier-Stokes equations, corresponding to these measurements.

Since our goal here is to analyze the long-time behavior of solutions, in all the statements below we make the assumption that the reference solution $\bu$ is a trajectory in $\mA$, the global attractor of the 2D Navier-Stokes equations, recalled in section \ref{sec2DNSESetting}. We remark, however, that the same results still hold by assuming that $\bu$ is a solution of the 2D Navier-Stokes equations starting at a point $\bu(t_0) = \bu_0 ]in H$ with $t_0$ large enough so that the uniform bounds \eqref{unifboundsattractor} are also valid for $\bu$, up to a multiplicative absolute constant.

We know describe the necessary setup for introducing our algorithm. 

Let us denote by $\{t_n\}_{n\in\mathbb{N}}$ the sequence of instants of time in $[t_0, \infty)$ at which measurements are taken. We assume that 
\[
 t_n < t_{n+1}, \quad \forall n \in \mathbb{N},
\]
and 
\[
 t_n \to \infty, \quad \mbox{ as } n \to \infty.
\]
Moreover, we denote the maximum step size between successive measurements by a positive constant $\kappa$, so that
\[
 |t_{n+1} - t_n| \leq \kappa, \quad \forall n \in \mathbb{N}.
\]
Also, we assume that the data is collected from a spatial coarse mesh with resolution of size $h$.

In order to be able to use these discrete measurements in our algorithm, we must perform an interpolation in time and space. For the spatial interpolation, we consider a linear operator $I_h: L^2(\Omega)^2 \rightarrow L^2(\Omega)^2$, which is assumed to satisfy the following properties:

\renewcommand{\theenumi}{{P}\arabic{enumi}}
\begin{enumerate}
 \item\label{propintP1} There exists a positive constant $c_0$ such that 
 \be\label{eqpropintP1}
 |\varphi - I_h(\varphi)|_{L^2} \leq c_0 h \|\varphi\|_{H^1}, \quad \forall \varphi \in H^1(\Omega)^2.
\ee
 \item\label{propintP2} $I_h: L^2(\Omega)^2 \to L^2(\Omega)^2$ is a bounded operator, i.e. there exists a positive constant $c_1$ such that 
 \be\label{eqpropintP2}
  |I_h(\varphi)|_{L^2} \leq c_1 |\varphi|_{L^2}, \quad \forall \varphi \in L^2(\Omega)^2.
 \ee 
\end{enumerate}

An example of interpolant operator satisfying properties \eqref{propintP1} and \eqref{propintP2} is given by the low Fourier modes projector $P_m$, with $\lambda_1 m \leq 1/h^2$, where we recall that $\lambda_1$ is the first eigenvalue of the Stokes operator. Another more physical example is given by the sum of local spatial averages over finite volume elements (see, e.g., \cite{FoiasTiti1991,JonesTiti1992,JonesTiti1993}). 

Finally, we denote by $\eta_n$ the error associated to the measurements at time $t_n$, obtained after interpolation in space (for details on how to obtain $\eta_n$ in a particular example, see the Appendix). We assume that $\eta_n \in L^2(\Omega)^2$.

The observational measurements at each time $t_n$ are thus represented by
\be\label{repmeasurements}
 \tilde{\bu}(t_n) = I_h(\bu(t_n)) + \eta_n,
\ee
where $\bu$ is the unknown reference solution, satisfying \eqref{functionaleq}. 

Then, inspired by the continuous data assimilation algorithm introduced in \cite{Azouanietal2014}, we introduce the following discrete data assimilation algorithm for finding an approximate solution $\bv$ of the unknown reference solution $\bu$ of the two-dimensional Navier-Stokes equations. Given an arbitrary initial data $\bv_0$, we look for a function $\bv$ satisfying $\bv(t_0) = \bv_0$, the same boundary conditions for $\bu$, and the following system
\be\label{DiscDataAssProb1}
\frac{\partial \bv}{\partial t} - \nu \Delta \bv + (\bv \cdot \nabla) \bv + \nabla q = \f  - \beta \sum_{n=0}^\infty (I_h(\bv(t_n)) - \tilde{\bu}(t_n))) \chi_n,
\ee
\be\label{DiscDataAssProb2}
 \nabla \cdot \bv = 0,
\ee
where $\nu$ and $\f$ are the same kinematic viscosity parameter and forcing term from \eqref{NSEeqs}, respectively; $q$ is a modified pressure; $\tilde{\bu}(t_n)$ represents the observational measurements at time $t_n$, given in \eqref{repmeasurements}; $\chi_n$ is the characteristic function of the interval $[t_n,t_{n+1})$; and $\beta > 0$ is a relaxation (nudging) parameter. The purpose of $\beta$ is to force the coarse spatial scales of $\bv$ toward those of the reference solution $\bu$. 

Notice that, using the definition of $\tilde{\bu}(t_n)$ given in \eqref{repmeasurements} and the functional setting from section \ref{sec2DNSESetting}, we can rewrite system \eqref{DiscDataAssProb1}-\eqref{DiscDataAssProb2} in the following equivalent form
\be\label{DataAssEqDiscTimeGeneral}
\frac{\rd \bv}{\rd t} + \nu A\bv + B(\bv,\bv) = \bg  - \beta \sum_{n=0}^\infty P_{\sigma} (I_h(\bv(t_n)) - I_h(\bu(t_n))) \chi_n  + \beta \sum_{n=0}^{\infty} P_{\sigma} \eta_n \chi_n,
\ee
where $\bg = P_{\sigma}\f$, as before.

The existence and uniqueness of weak and strong solutions for the initial-value problem associated to \eqref{DataAssEqDiscTimeGeneral} can be proved by using the classical corresponding results already known for the 2D Navier-Stokes equations (Theorems \ref{thmexistuniqweaksolNSE} and \ref{thmexistuniqstrongsolNSE}). These are proved in the following two theorems. We remark that, as in Theorems \ref{thmexistuniqweaksolNSE} and \ref{thmexistuniqstrongsolNSE}, the result below is also valid in the more general case when $\bg \in L^{\infty}(t_0,\infty;H)$. 

\begin{thm}\label{thmexistuniqweaksolDiscDataAss}
Let $\bv_0 \in H$ and let $\bu$ be a trajectory in $\mA$. Then, there exists a unique (weak) solution $\bv$ of \eqref{DataAssEqDiscTimeGeneral} on $[t_0, \infty)$, satisfying $\bv(t_0) = \bv_0$ and
 \be\label{propvweaksol}
   \bv \in C([t_0,\infty);H) \cap L^2_{\textnormal{loc}} (t_0,\infty;V),\,\, \frac{\rd \bv}{\rd t} \in L^2_{\textnormal{loc}} (t_0,\infty;V').
 \ee
\end{thm}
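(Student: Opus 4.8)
The plan is to exploit the crucial structural feature of \eqref{DataAssEqDiscTimeGeneral}: the feedback forcing is evaluated at the discrete times $t_n$ and switched on and off by the characteristic functions $\chi_n$, so on each interval $[t_n,t_{n+1})$ it is \emph{constant in time}. More precisely, on $[t_n,t_{n+1})$ the equation reads
\[
\frac{\rd \bv}{\rd t} + \nu A\bv + B(\bv,\bv) = \bg_n, \qquad \bg_n := \bg - \beta P_{\sigma}\bigl(I_h(\bv(t_n)) - I_h(\bu(t_n))\bigr) + \beta P_{\sigma}\eta_n,
\]
where $\bg_n$ is a fixed element of $H$: indeed $\bg \in H$, while property \eqref{eqpropintP2} together with the boundedness of $P_{\sigma}$ on $L^2(\Omega)^2$ and the facts $\bu(t_n) \in \mA \subset H$, $\eta_n \in L^2(\Omega)^2$ guarantee $\bg_n \in H$. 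Since a constant in $H$ belongs to $L^{\infty}(t_n,t_{n+1};H)$, this is exactly the 2D Navier--Stokes system covered by Theorem \ref{thmexistuniqweaksolNSE}, in the form valid for $\bg \in L^{\infty}(t_0,\infty;H)$.

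I would therefore construct the solution inductively, interval by interval. Starting from $\bv(t_0) = \bv_0 \in H$, Theorem \ref{thmexistuniqweaksolNSE} furnishes a unique weak solution on $[t_0,t_1]$ with the regularity \eqref{propvweaksol} restricted to that interval; in particular $\bv \in C([t_0,t_1];H)$, so the terminal value $\bv(t_1) \in H$ is well defined. Using $\bv(t_1)$ as initial datum and the now-determined constant forcing $\bg_1 \in H$, the same theorem produces the unique solution on $[t_1,t_2]$, and so on. Because $t_n \to \infty$, any finite horizon $[t_0,T]$ meets only finitely many of the intervals, so the infinite sum in \eqref{DataAssEqDiscTimeGeneral} causes no difficulty: finitely many concatenations cover $[t_0,T]$, and letting $T \to \infty$ yields a function $\bv$ defined on all of $[t_0,\infty)$. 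Note that the feedback is \emph{explicit} --- it depends on $\bv(t_n)$, the value already fixed at the left endpoint, not on the current value $\bv(t)$ --- so no additional fixed-point argument is needed within each interval.

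It then remains to check that the concatenation has the global regularity \eqref{propvweaksol}. The memberships $\bv \in L^2_{\textnormal{loc}}(t_0,\infty;V)$ and $\rd\bv/\rd t \in L^2_{\textnormal{loc}}(t_0,\infty;V')$ follow by summing the corresponding local bounds over the finitely many subintervals contained in any $[t_0,T]$. The one point requiring care is continuity in $H$ across the junction points $t_n$: this holds because each piece lies in $C([t_n,t_{n+1}];H)$ and the construction matches the terminal value of one piece to the initial value of the next, giving $\bv \in C([t_0,\infty);H)$. Uniqueness propagates the same way: on each interval the solution is unique by Theorem \ref{thmexistuniqweaksolNSE}, and since the forcing $\bg_n$ on the $n$-th interval is determined solely by the (unique) data at $t_n$, an induction on $n$ shows that any two weak solutions of \eqref{DataAssEqDiscTimeGeneral} agreeing at $t_0$ must coincide on every $[t_n,t_{n+1})$, hence on $[t_0,\infty)$.

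The main obstacle --- to the extent there is one --- is conceptual rather than computational: recognizing that the discrete-in-time, piecewise-constant structure of the nudging term decouples \eqref{DataAssEqDiscTimeGeneral} into a sequence of standard Navier--Stokes problems with time-independent $H$-forcing, thereby avoiding the need to treat the feedback as a genuine in-interval perturbation of the dynamics. Once this reduction is made, the remaining work is the routine bookkeeping of patching the local solutions and verifying that the regularity classes in \eqref{propvweaksol} are preserved under concatenation.
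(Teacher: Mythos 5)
Your proposal is correct and follows essentially the same route as the paper's own proof: decompose $[t_0,\infty)$ into the intervals $[t_n,t_{n+1})$, observe that on each one the equation is the 2D Navier--Stokes system with a constant forcing in $H$ determined by the already-constructed value at $t_n$, apply Theorem \ref{thmexistuniqweaksolNSE} inductively, and concatenate, with uniqueness propagated interval by interval using continuity in $H$ at the junction points. The only cosmetic difference is that the paper solves on $[t_n,\infty)$ and restricts, whereas you solve directly on $[t_n,t_{n+1}]$; this changes nothing of substance.
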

\begin{proof}
Consider $\bh_0 = \bg - \beta(I_h(\bv_0) - \tilde{\bu}(t_0))$. Since $\bv_0 \in H$ and $\bh_0 \in H$, by Theorem \ref{thmexistuniqweaksolNSE}, there exists a unique weak solution $\bv^0$ of the 2D Navier-Stokes equations on $[t_0, \infty)$ corresponding to the forcing term $\bh_0$ and such that $\bv^0(t_0) = \bv_0$. 

Then, considering $\bh_1 = \bg - \beta(I_h(\bv^0(t_1) - \tilde{\bu}(t_1)) \in H$ and applying Theorem \ref{thmexistuniqweaksolNSE} once again, we have a unique weak solution $\bv^1$ of the 2D Navier-Stokes equations on $[t_1, \infty)$ corresponding to the forcing term $\bh_1$ and such that $\bv^1(t_1) = \bv^0(t_1) \in H$. 

Proceeding inductively, we have that for each $n \in \mathbb{N}$ there exists a unique weak solution $\bv^n$ of the 2D Navier-Stokes equations on $[t_n, \infty)$ corresponding to the forcing term $\bh_n = \bg - \beta(I_h(\bv^{n-1}(t_n) - \tilde{\bu}(t_n)) \in H$ and such that $\bv^n(t_n) = \bv^{n-1}(t_n) \in H$.

Let $\bv$ be the function defined on $[t_0, \infty)$ and given by
\[
 \bv(t) = \bv^n(t), \quad \forall t \in [t_n, t_{n+1}), \quad \forall n \in \mathbb{N}.
\]
By construction, $\bv$ is a solution of \eqref{DataAssEqDiscTimeGeneral} satisfying $\bv(t_0) = \bv_0$ and the properties in \eqref{propvweaksol}. Indeed, the equality $\bv^n(t_n) = \bv^{n-1}(t_n)$, valid for all $n \in \mathbb{N}$, guarantees that $\bv$ is a continuous function on $[t_0,\infty)$ in $H$. Moreover, since for every $n \in \mathbb{N}$ we have
\[
  \bv^n \in L^2_{\textnormal{loc}} (t_n,\infty;V),\,\, \frac{\rd \bv^n}{\rd t} \in L^2_{\textnormal{loc}} (t_n,\infty;V'),
\]
and the sequence $\{t_n\}_{n\in\mathbb{N}}$ of concatenating points is a countable set, it follows that
\[
 \bv \in L^2_{\textnormal{loc}} (t_0,\infty;V),\,\, \frac{\rd \bv}{\rd t} \in L^2_{\textnormal{loc}} (t_0,\infty;V').
\]

For the uniqueness, suppose that $\tilde{\bv}$ is another weak solution of \eqref{DataAssEqDiscTimeGeneral} on $[t_0, \infty)$ satisfying $\tilde{\bv}(t_0) = \bv_0$. Thus, $\tilde{\bv}|_{[t_0,t_1)}$ is a weak solution of the 2D Navier-Stokes equations on $[t_0,t_1)$ corresponding to the forcing term $\bh_0$ satisfying $\tilde{\bv}|_{[t_0,t_1)}(t_0) = \bv_0 = \bv|_{[t_0,t_1)}(t_0)$, so that $\tilde{\bv}|_{[t_0,t_1)} = \bv|_{[t_0,t_1)}$. But since $\bv, \tilde{\bv}\in \mC([t_0, \infty); H)$, they must also coincide on the closed interval $[t_0,t_1]$. Then, we can apply the same argument to the following interval $[t_1,t_2)$ and, proceeding inductively, to all subsequent intervals $[t_n, t_{n+1})$, $n \geq 2$. Therefore, $\bv= \tilde{\bv}$.
\end{proof}

\begin{thm}
Let $\bv_0 \in V$ and let $\bu$ be a trajectory in $\mA$. Then, there exists a unique (strong) solution $\bv$ of \eqref{DataAssEqDiscTimeGeneral} satisfying $\bv(t_0) = \bv_0$ and
 \be
   \bv \in C([t_0,\infty);V) \cap L^2_{\textnormal{loc}} (t_0,\infty;D(A)) ,\,\, \frac{\rd \bv}{\rd t} \in L^2_{\textnormal{loc}} (t_0,\infty;H).
 \ee
\end{thm}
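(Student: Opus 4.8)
The plan is to mirror the concatenation argument already used for weak solutions in Theorem \ref{thmexistuniqweaksolDiscDataAss}, replacing every appeal to the weak theory (Theorem \ref{thmexistuniqweaksolNSE}) by the strong one (Theorem \ref{thmexistuniqstrongsolNSE}). The crucial structural observation is that on each interval $[t_n, t_{n+1})$ the feedback term in \eqref{DataAssEqDiscTimeGeneral} is time-independent, since it involves $\bv$ only through its value at the single instant $t_n$. Hence on each such interval the algorithm reduces to the ordinary 2D Navier-Stokes equations driven by a fixed forcing $\bh_n \in H$, and the entire construction is a matter of solving the standard equation on successive intervals and gluing the pieces together.

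First I would launch the induction, checking at each step that the forcings stay in $H$ and the data stay in $V$. Set $\bh_0 = \bg - \beta P_\sigma(I_h(\bv_0) - \tilde{\bu}(t_0))$; since $\bv_0 \in V \subset H$ and $\tilde{\bu}(t_0) = I_h(\bu(t_0)) + \eta_0 \in L^2(\Omega)^2$, the boundedness assumption \eqref{eqpropintP2} on $I_h$, together with $\bg \in H$ and $\eta_0 \in L^2(\Omega)^2$, gives $\bh_0 \in H$. Theorem \ref{thmexistuniqstrongsolNSE}, applied with datum $\bv_0 \in V$ and forcing $\bh_0$, then yields a unique strong solution $\bv^0$ on $[t_0,\infty)$ with $\bv^0 \in C([t_0,\infty);V) \cap L^2_{\textnormal{loc}}(t_0,\infty;D(A))$ and $\rd \bv^0/\rd t \in L^2_{\textnormal{loc}}(t_0,\infty;H)$. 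In particular $\bv^0(t_1) \in V$, so the step repeats: given $\bv^{n-1}$, I set $\bh_n = \bg - \beta P_\sigma(I_h(\bv^{n-1}(t_n)) - \tilde{\bu}(t_n)) \in H$ and invoke Theorem \ref{thmexistuniqstrongsolNSE} with the $V$-valued datum $\bv^{n-1}(t_n)$ to obtain $\bv^n$ on $[t_n,\infty)$ with $\bv^n(t_n) = \bv^{n-1}(t_n)$. Defining $\bv(t) = \bv^n(t)$ for $t \in [t_n, t_{n+1})$ produces the candidate solution.

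It then remains to verify the regularity of the glued function and uniqueness. Continuity in $V$ is the one point that genuinely differs from the weak case: at each junction $t_n$ I would use that $\bv^{n-1}$ is continuous in $V$ from the left with limit $\bv^{n-1}(t_n)$, while $\bv^n$ is continuous in $V$ from the right with limit $\bv^n(t_n) = \bv^{n-1}(t_n)$, so the two values agree and $\bv \in C([t_0,\infty);V)$. The interior regularity $\bv \in L^2_{\textnormal{loc}}(t_0,\infty;D(A))$ with $\rd \bv / \rd t \in L^2_{\textnormal{loc}}(t_0,\infty;H)$ follows because each piece has these properties and the junction set $\{t_n\}_{n\in\mathbb{N}}$ is countable, hence of measure zero. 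For uniqueness I would argue exactly as in Theorem \ref{thmexistuniqweaksolDiscDataAss}: any other strong solution restricted to $[t_0,t_1)$ solves the 2D Navier-Stokes equations with forcing $\bh_0$ and the same datum, hence coincides with $\bv$ there by uniqueness of strong Navier-Stokes solutions; continuity in $V$ extends the equality to $[t_0,t_1]$, and induction over the intervals $[t_n,t_{n+1})$ completes the proof.

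The main obstacle here is mild and essentially bookkeeping: one must confirm at every step that $\bv^{n-1}(t_n)$ indeed lands in $V$ so that the strong theory applies again, that $\bh_n$ remains in $H$ (which is precisely where \eqref{eqpropintP2} is used), and that the $V$-continuity of each piece is strong enough to make the concatenation continuous in the $V$ norm rather than merely in $H$. No estimate beyond the classical strong well-posedness of the 2D Navier-Stokes equations and the boundedness of $I_h$ is required.
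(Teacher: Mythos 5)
Your proposal is correct and takes essentially the same approach as the paper: the paper's own proof consists precisely of the remark that one repeats the concatenation argument of Theorem \ref{thmexistuniqweaksolDiscDataAss}, invoking Theorem \ref{thmexistuniqstrongsolNSE} in place of Theorem \ref{thmexistuniqweaksolNSE}, which is exactly what you carry out. Your explicit checks --- that each forcing $\bh_n$ lies in $H$ via \eqref{eqpropintP2}, that each datum $\bv^{n-1}(t_n)$ lies in $V$, and that the glued function is continuous in the $V$ norm at the junctions --- simply spell out the details the paper leaves implicit.
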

\begin{proof}
The proof follows by analogous arguments of those in the proof of Theorem \ref{thmexistuniqweaksolDiscDataAss}, but using Theorem \ref{thmexistuniqstrongsolNSE} instead.
\end{proof}

\section{Asymptotic in time analysis for individual solutions}\label{secAsympAnalysis}

In this section, we analyze the asymptotic behavior in time of the difference between the solution $\bv$ of the discrete data assimilation algorithm \eqref{DataAssEqDiscTimeGeneral} for an arbitrary initial data $\bv_0$ and the unknown reference solution $\bu$ of the 2D Navier-Stokes equations corresponding to the discrete measurements. The results show that convergence of $\bv$ to $\bu$ follows up to a term depending on the errors associated to the observational measurements, provided $\beta$ is large enough and $\kappa$ and $h$ are sufficiently small.

First, in subsection \ref{subsecGalerkinProj}, we consider the particular case in which the interpolant operator $I_h$ is given by the low Fourier modes projector $P_m$, $m \in \mathbb{N}$. In this case, the initial data $\bv_0$ is allowed to be any element in $H$, and an asymptotic estimate of the difference between $\bv$ and $\bu$ is obtained with respect to the norm in $H$. Next, in subsection \ref{subsecGeneralInt}, we consider the more general case of an interpolant operator satisfying properties \eqref{propintP1} and \eqref{propintP2}, where we must consider $\bv_0 \in V$ and the corresponding asymptotic estimate is obtained with respect to the norm in $V$. The reason for this difference between each case will be explained within the next subsections.

\subsection{The case of projection on low Fourier modes}\label{subsecGalerkinProj}

In the particular case when the interpolant operator $I_h$ is given by the low Fourier modes projector $P_m$, $m \in \mathbb{N}$, the observational measurements at time $t_n$ are given by
\[ \tilde{\bu}(t_n) = P_m \bu(t_n) + \eta_n,
\]
where we recall that $\bu$ is the unknown reference solution of \eqref{functionaleq} and $\eta_n \in L^2(\Omega)^2$ is the error associated to the measurements at time $t_n$. Notice that $P_m\bu(t_n)$ is not known and all that is given is $\tilde{\bu}(t_n)$. 

The discrete data assimilation algorithm \eqref{DataAssEqDiscTimeGeneral} is therefore given in this case as
\be\label{DataAssEqDiscTimeFourier}
  \frac{\rd \bv}{\rd t} + \nu A\bv + B(\bv,\bv) = \bg - \beta \sum_{n=0}^\infty (P_m\bv(t_n) - P_m\bu(t_n)) \chi_n + \beta \sum_{n=0}^{\infty} P_{\sigma} \eta_n \chi_n.
\ee

In the following theorem, we analyze the asymptotic behavior in time of the solution $\bv$ of \eqref{DataAssEqDiscTimeFourier} corresponding to an initial data $\bv_0 \in H$. We assume that the sequence of errors $\{\eta_n\}_{n\in\mathbb{N}}$ is bounded in $L^2(\Omega)^2$, with a bound given by a positive constant $E_0$, which represents the maximum ``size'' of the errors. In applications, the constant $E_0$ would be given in terms of the accuracy associated to the experimental devices used for obtaining the measurements (see the Appendix for an explicit estimate of this type in the case of an interpolant operator given by local averages over finite volume elements).

Moreover, we assume suitable conditions on $\beta$, $\kappa$ and $m$, which are expressed in terms of the uniform bounds of $\bu$, given by the constants $M_0$ and $M_1$ from \eqref{unifboundsattractor}.

We recall that $c$ denotes a generic absolute constant.

\begin{thm}\label{thmasympestwFourier}
 Let $\bu$ be a trajectory in $\mA$. Consider $\bv_0 \in B_H(M_0)$, and let $\bv$ be the unique solution of \eqref{DataAssEqDiscTimeFourier} on $[t_0,\infty)$ satisfying $\bv(t_0) = \bv_0$. Assume that $\{\eta_n\}_{n\in\mathbb{N}}$ is a bounded sequence in $L^2(\Omega)^2$, namely, there exists a constant $E_0 \geq 0$ such that
 \be\label{bounderrorH}
  |\eta_n|_{L^2} \leq E_0, \quad \forall n \in \mathbb{N}.
 \ee
 If $\beta$ and $m$ are large enough such that
\be\label{condbetamodes}
 \beta \geq c \frac{M_1^2}{\nu},
\ee
\be\label{mCond}
 \lambda_{m+1} \geq 6\frac{\beta}{\nu},
\ee
and $\kappa$ is small enough such that
\begin{multline}\label{condkappamodes}
 \kappa \leq \frac{c}{\beta} \min\left\{ 1, \frac{\nu}{M_0 + E_0}, \frac{\nu^2}{ (M_0 + E_0)^2}, \frac{\nu^{3/2}\beta^{1/2}}{ M_0 M_1},  \frac{\nu^2 \lambda_1^{1/2}}{M_0 M_1}, \frac{(\nu \lambda_1)^{1/3}}{\beta^{1/3}}, \right.\\
 \left. \frac{(\nu \lambda_1)^{1/2}}{\beta^{1/2}}, \frac{(\nu \lambda_1)^2}{\beta^2}\right\}
\end{multline}
then
\[
 \limsup_{t\rightarrow \infty} |\bv(t) - \bu(t)|_{L^2} \leq c E_0.
\]
Moreover, if $E_0 = 0$, then $\bv(t) \rightarrow \bu(t)$ in $H$, exponentially, as $t \rightarrow \infty$.
\end{thm}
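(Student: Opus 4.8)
The plan is to study the evolution of the difference $\bw = \bv - \bu$ and derive a differential inequality for $|\bw(t)|_{L^2}^2$ on each subinterval $[t_n,t_{n+1})$, then chain these together. Subtracting \eqref{functionaleq} from \eqref{DataAssEqDiscTimeFourier}, on $[t_n,t_{n+1})$ the difference satisfies
\be\label{planwevol}
 \frac{\rd \bw}{\rd t} + \nu A\bw + B(\bv,\bv) - B(\bu,\bu) = -\beta P_m \bw(t_n) + \beta P_{\sigma}\eta_n.
\ee
Taking the $L^2$ inner product with $\bw$, using the orthogonality property \eqref{PropBilTerm} to write $(B(\bv,\bv)-B(\bu,\bu),\bw)_{L^2} = (B(\bw,\bu),\bw)_{L^2}$, and bounding this trilinear term via the Ladyzhenskaya inequality \eqref{estnonlineartermInnerProd} together with Young's inequality, I would absorb the dissipation to control the nonlinearity. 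The essential structure of the feedback term is that $-\beta(P_m\bw(t_n),\bw)_{L^2}$ should produce a damping $-\beta|P_m\bw|_{L^2}^2$ \emph{if} the nudging were evaluated at the current time $t$; the discrete-in-time sampling means I must instead write $P_m\bw(t_n) = P_m\bw(t) - (P_m\bw(t) - P_m\bw(t_n))$, extracting the good damping term $-\beta|P_m\bw(t)|_{L^2}^2$ plus a commutator-type error $\beta(P_m(\bw(t)-\bw(t_n)),\bw(t))_{L^2}$.

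The key idea for handling the damping is that on the low modes $P_m\bw$ we get genuine dissipation $-\beta|P_m\bw|_{L^2}^2$, while on the high modes $Q_m\bw$ the viscous term gives $\nu\|Q_m\bw\|_{H^1}^2 \geq \nu\lambda_{m+1}|Q_m\bw|_{L^2}^2$; the spectral-gap condition \eqref{mCond}, $\lambda_{m+1}\geq 6\beta/\nu$, guarantees that the high-mode viscous damping dominates, so that combining both pieces yields a uniform exponential decay rate proportional to $\beta$ for the whole of $|\bw|_{L^2}^2$. Condition \eqref{condbetamodes}, $\beta\geq cM_1^2/\nu$, is what lets the damping beat the nonlinear term after the Young's-inequality splitting. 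The error forcing contributes $\beta(P_\sigma\eta_n,\bw)_{L^2} \leq \beta E_0|\bw|_{L^2}$, which after Young's inequality feeds a constant source term of size $\sim\beta E_0^2$ into the differential inequality, ultimately producing the $cE_0$ asymptotic bound.

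The hard part will be controlling the time-sampling commutator $\beta(P_m(\bw(t)-\bw(t_n)),\bw(t))_{L^2}$, and this is where all the terms in the elaborate condition \eqref{condkappamodes} on $\kappa$ originate. To estimate $|\bw(t)-\bw(t_n)|_{L^2}$ for $t\in[t_n,t_{n+1})$, I would integrate \eqref{planwevol} in time and bound each resulting contribution: the viscous term gives a factor $\nu\kappa$, the nonlinearity gives $\kappa$ times products of $M_0,M_1$ (and possibly $|A\bu|$-type quantities, invoking \eqref{estnonlineartermHL4L4L2}), the nudging term gives $\beta\kappa$, and the error term gives $\beta\kappa E_0$. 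Each of these must be made small relative to the damping rate, and matching the various homogeneities in $\nu,\beta,\lambda_1,M_0,M_1,E_0$ is precisely what forces the several distinct upper bounds on $\kappa$ in the minimum. The delicate bookkeeping here — ensuring the commutator error is a small fraction of the damping so a genuine Gr\"onwall-type decay survives on each subinterval — is the crux; once that is in place, iterating the per-interval estimate across all $n$ and letting $t\to\infty$ yields $\limsup_{t\to\infty}|\bw(t)|_{L^2}\leq cE_0$, with the source term vanishing when $E_0=0$ to give exponential convergence.
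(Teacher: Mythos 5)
Your overall strategy --- the energy identity for $\bw=\bv-\bu$, splitting the feedback term into the current-time damping $-\beta|P_m\bw(t)|_{L^2}^2$ plus a time-sampling error, absorbing the high-mode part $\beta|Q_m\bw|_{L^2}^2$ into the viscous term via the spectral condition \eqref{mCond}, and iterating a Gr\"onwall estimate over the intervals $[t_n,t_{n+1}]$ --- is exactly the paper's. The genuine gap is in your treatment of the time-sampling term. You propose to bound $|\bw(t)-\bw(t_n)|_{L^2}$ by $\int_{t_n}^t |\rd\bw/\rd s|_{L^2}\,\rd s$ and to estimate the integrand from the equation in the $L^2$ norm, invoking in particular \eqref{estnonlineartermHL4L4L2}. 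But under the theorem's hypothesis $\bv_0\in B_H(M_0)$, $\bv$ is only a \emph{weak} solution: by Theorem \ref{thmexistuniqweaksolDiscDataAss} one has $\rd\bv/\rd t\in L^2_{\textnormal{loc}}(t_0,\infty;V')$ only, and the terms $\nu A\bw$, $B(\bu,\bw)$, $B(\bw,\bw)$ are not in $H$ unless $\bw\in D(A)$. On $[t_0,t_1]$ these quantities are undefined or non-integrable (parabolic smoothing only gives $|\rd\bw/\rd s|_{L^2}\lesssim (s-t_0)^{-1}$), and even on later intervals, where smoothing makes $\bw$ strong, the resulting bound would involve $\int|A\bw|_{L^2}^2$, which is \emph{not} controlled by the $L^2$-level energy estimate that drives your iteration; controlling it requires the $V$-level energy estimate, i.e.\ precisely the strong-solution framework.

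The fix --- and the entire reason the paper treats the Fourier case separately --- is to exploit the smoothing/self-adjointness of $P_m$: write $(P_m(\bw(t_n)-\bw(t)),\bw(t))_{L^2}=(\bw(t_n)-\bw(t),P_m\bw(t))_{L^2}$, so that the increment is paired in $V'$ against $\|P_m\bw(t)\|_{H^1}\leq\|\bw(t)\|_{H^1}$ and only $\|\rd\bw/\rd s\|_{V'}$ is needed (the paper's estimate \eqref{estC}), which is the natural quantity for weak solutions and is estimated from the equation via \eqref{estnonlineartermV'}. Your $L^2$-norm route is the one the paper is forced to use for general interpolants (Theorem \ref{thmasympestwGeneral}, estimate \eqref{est5}), and there the hypothesis is correspondingly strengthened to $\bv_0\in V$. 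A secondary omission: your accounting of the nonlinearity in the commutator as ``$\kappa$ times products of $M_0,M_1$'' ignores the contributions quadratic in $\bw$ coming from $B(\bw,\bw)$; closing the Gr\"onwall argument then requires an a priori bound $|\bw(t)|_{L^2}\leq 2R$ with $R=2(M_0+E_0)$, maintained by a continuation/bootstrap argument (the paper's $\tilde t$ in \eqref{deftildet}), which your sketch does not supply and which is the source of the $\nu/(M_0+E_0)$ and $\nu^2/(M_0+E_0)^2$ entries in \eqref{condkappamodes}.
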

\begin{proof}
Denote $\bw = \bv -\bu$. Subtracting \eqref{functionaleq} from \eqref{DataAssEqDiscTimeFourier}, we obtain the following functional equation for $\bw$:
 \be\label{wEqDiscDataAssFourier}
  \frac{\rd \bw}{\rd t} + \nu A\bw + B(\bw,\bu) + B(\bu,\bw) + B(\bw,\bw) = -\beta \sum_{n=0}^\infty P_m \bw(t_n)\chi_n + \beta\sum_{n=0}^\infty P_{\sigma} \eta_n \chi_n,
 \ee
which holds in $L^2_{\textnormal{loc}}(t_0,\infty;V')$.

When applied to $\bw$, \eqref{wEqDiscDataAssFourier} yields
 \begin{multline}\label{energyeqwFourier}
  \frac{1}{2} \frac{\rd}{\rd t} |\bw|^2_{L^2} + \nu \|\bw\|^2_{H^1} = \\
= - \langle B(\bw,\bu),\bw\rangle_{V',V} -\beta \sum_{n=0}^\infty (P_m(\bw(t_n) - \bw),\bw)_{L^2}\chi_n + \beta \sum_{n=0}^\infty (\eta_n, \bw)_{L^2} \chi_n \\
= - \langle B(\bw,\bu),\bw\rangle_{V',V}-\beta |P_m \bw|_{L^2}^2 -\beta\sum_{n=0}^\infty (P_m(\bw(t_n) - \bw), \bw)_{L^2} \chi_n + \\
+ \beta \sum_{n=0}^\infty (\eta_n, \bw)_{L^2} \chi_n,
 \end{multline}
where we used property \eqref{PropBilTerm} of the bilinear term.

Now we estimate each term on the right-hand side of \eqref{energyeqwFourier}. Since $\bu$ is a trajectory in $\mA$, we can use the bounds from \eqref{unifboundsattractor}.

Using \eqref{PropBilTerm}, \eqref{estnonlineartermInnerProd} and Young's inequality, we obtain that 
 \begin{eqnarray}\label{estA}
  |\langle B(\bw,\bu),\bw\rangle_{V',V}| &\leq& c \|\bu\|_{H^1} |\bw|_{L^2} \|\bw\|_{H^1} \nonumber\\
                           &\leq& \frac{\nu}{6} \|\bw\|_{H^1}^2 + \frac{c}{\nu} \|\bu\|^2_{H^1} |\bw|^2_{L^2} \nonumber\\
&\leq& \frac{\nu}{6} \|\bw\|_{H^1}^2 + c \frac{M_1^2}{\nu} |\bw|^2_{L^2}.
 \end{eqnarray}
 
Notice that 
 \begin{eqnarray}\label{estB}
  -\beta|P_m \bw|^2_{L^2} &=& -\beta|\bw|^2_{L^2} + \beta | Q_m \bw |^2_{L^2} \nonumber\\
                            &\leq& -\beta |\bw|^2_{L^2} + \frac{\beta}{\lambda_{m+1}}  \| Q_m \bw \|^2_{H^1} \nonumber\\
                            &\leq& -\beta |\bw|^2_{L^2} + \frac{\beta}{\lambda_{m+1}}  \| \bw \|^2_{H^1} \nonumber\\
                            &\leq& -\beta |\bw|^2_{L^2} + \frac{\nu}{6} \| \bw \|^2_{H^1},
 \end{eqnarray}
where in the last inequality we used hypothesis \eqref{mCond}.

Also, using the bound from hypothesis \eqref{bounderrorH}, we have
\be
  \beta |(\eta_n, \bw)_{L^2}| \leq \beta |\eta_n|_{L^2} |\bw|_{L^2} \leq \frac{\beta}{2} E_0^2 + \frac{\beta}{2}|\bw|_{L^2}^2.
\ee

Moreover, 
 \begin{multline}\label{estC}
  \beta |(P_m(\bw(t_n) - \bw(t)),\bw(t))_{L^2}| = \beta |(\bw(t_n) - \bw(t),P_m\bw(t))_{L^2}| \\
  = \beta \left| \left( \int_{t_n}^t \frac{\rd \bw}{\rd s}(s) \rd s, P_m \bw(t) \right)_{L^2}\right| \\
  \leq \beta\left( \int_{t_n}^t \left\| \frac{\rd \bw}{\rd s}(s)\right\|_{V'} \rd s \right) \|\bw(t)\|_{H^1} \\
  \leq \frac{\nu}{6} \|\bw(t)\|_{H^1}^2 + \frac{c\beta^2}{\nu} \left( \int_{t_n}^t \left\| \frac{\rd \bw}{\rd s}(s)\right\|_{V'} \rd s\right)^2.
 \end{multline}

%

Furthermore, from \eqref{wEqDiscDataAssFourier}, \eqref{estnonlineartermV'} and Poincar\'e inequality \eqref{ineqPoincare}, we obtain that
 \begin{multline}
  \left\|\frac{\rd \bw}{\rd s}(s) \right\|_{V'} \leq \nu \|\bw(s)\|_{H^1} + \| B(\bw(s), \bu(s))\|_{V'} + \| B(\bu(s), \bw(s))\|_{V'} + \\
                                                     + \|B(\bw(s), \bw(s))\|_{V'} + \beta \|\bw(t_n) - \bw(s)\|_{V'} + \beta \|\bw(s)\|_{V'} + \beta \|\eta_n\|_{V'}\\
                                                \leq \nu \|\bw(s)\|_{H^1} + c |\bu(s)|_{L^2}^{1/2} \|\bu(s)\|_{H^1}^{1/2} |\bw(s)|_{L^2}^{1/2} \|\bw(s)\|_{H^1}^{1/2} + c |\bw(s)|_{L^2} \|\bw(s)\|_{H^1} +\\
                                                  + \beta \int_{t_n}^s \left\| \frac{\rd \bw}{\rd \tau} (\tau) \right\|_{V'}\rd \tau + \frac{\beta}{\lambda_1^{1/2}} |\bw(s)|_{L^2} + \frac{\beta}{\lambda_1^{1/2}} E_0 \\
                                                \leq \nu \|\bw(s)\|_{H^1} + c (M_0 M_1)^{1/2} |\bw(s)|_{L^2}^{1/2}\|\bw(s)\|_{H^1}^{1/2} + c |\bw(s)|_{L^2} \|\bw(s)\|_{H^1} +\\
                                                    + \beta \int_{t_n}^s \left\| \frac{\rd \bw}{\rd \tau} (\tau) \right\|_{V'}\rd \tau + \frac{\beta}{\lambda_1^{1/2}} |\bw(s)|_{L^2} + \frac{\beta}{\lambda_1^{1/2}} E_0.
 \end{multline}
Integrating with respect to $s$ from $t_n$ to $t \in [t_n, t_{n+1})$, it follows that
 \begin{multline}\label{estE}
  \int_{t_n}^t \left\|\frac{\rd \bw}{\rd s}(s) \right\|_{V'} \rd s \leq 
  \int_{t_n}^t \left( \nu \|\bw(s)\|_{H^1} + c (M_0 M_1)^{1/2} |\bw(s)|_{L^2}^{1/2}\|\bw(s)\|_{H^1}^{1/2} + \right.\\ 
  \left. c |\bw(s)|_{L^2} \|\bw(s)\|_{H^1} + \frac{\beta}{\lambda_1^{1/2}} |\bw(s)|_{L^2} + \frac{\beta}{\lambda_1^{1/2}} E_0 \right) \rd s + \beta\kappa \int_{t_n}^t \left\| \frac{\rd \bw}{\rd s}(s) \right\|_{V'} \rd s,
 \end{multline}
where we used that
 \be\label{doubleinttime}
  \int_{t_n}^t \int_{t_n}^s \left\| \frac{\rd \bw}{\rd \tau} (\tau) \right\|_{V'}\rd \tau \rd s \leq \int_{t_n}^t \int_{t_n}^t \left\| \frac{\rd \bw}{\rd \tau} (\tau) \right\|_{V'} \rd \tau \rd s 
  \leq \kappa \int_{t_n}^t \left\| \frac{\rd \bw}{\rd \tau} (\tau) \right\|_{V'} \rd \tau.
 \ee
From condition \eqref{condkappamodes} on $\kappa$, with $c \leq 1/2$, we have in particular that $\beta \kappa \leq 1/2$. Thus, we obtain from \eqref{estE} that
 \begin{multline}\label{estF}
   \int_{t_n}^t \left\|\frac{\rd \bw}{\rd s}(s) \right\|_{V'} \rd s \leq 
  c \int_{t_n}^t \left( \nu \|\bw(s)\|_{H^1} + c (M_0 M_1)^{1/2} |\bw(s)|_{L^2}^{1/2}\|\bw(s)\|_{H^1}^{1/2} + \right.\\
  \left. |\bw(s)|_{L^2} \|\bw(s)\|_{H^1} + \frac{\beta}{\lambda_1^{1/2}} |\bw(s)|_{L^2} + \frac{\beta}{\lambda_1^{1/2}} E_0 \right) \rd s 
 \end{multline}
Now, from H\"older inequality, it follows that
 \be\label{estG}
  \left( \int_{t_n}^t \left\| \frac{\rd \bw}{\rd s}(s)\right\|_{V'} \rd s\right)^2 \leq c\kappa \int_{t_n}^t \varphi(s) \rd s + \frac{c \beta^2 \kappa^2}{\lambda_1} E_0^2,
 \ee
where
 \be\label{defvarphi}
  \varphi(s) = \nu^2 \|\bw(s)\|_{H^1}^2  + M_0 M_1 |\bw(s)|_{L^2} \|\bw(s)\|_{H^1} + |\bw(s)|_{L^2}^2 \|\bw(s)\|_{H^1}^2 + \frac{\beta^2}{\lambda_1} |\bw(s)|_{L^2}^2.  
 \ee
Thus, using estimates \eqref{estA}-\eqref{estC} and \eqref{estG} in \eqref{energyeqwFourier}, we obtain that
\begin{multline}\label{energyineqdiffform}
 \frac{\rd}{\rd t}|\bw(t)|^2_{L^2} + \nu \|\bw(t)\|_{H^1}^2 \leq -\left( \beta - c \frac{M_1^2}{\nu} \right) |\bw(t)|_{L^2}^2 + \frac{c\beta^2 \kappa}{\nu} \sum_{n=0}^\infty \chi_n(t) \int_{t_n}^t \varphi(s) \rd s +\\
 + \beta\left( 1 + \frac{c\beta^3 \kappa^2}{\nu \lambda_1} \right) E_0^2,
\end{multline}
for all $t \in [t_0,\infty)$.

Moreover, using condition \eqref{condbetamodes} on $\beta$ with a suitable absolute constant $c$, we have
\begin{multline}\label{energyineqdiffform1}
 \frac{\rd}{\rd t}|\bw(t)|^2_{L^2} + \nu \|\bw(t)\|_{H^1}^2 \leq -\frac{\beta}{2} |\bw(t)|_{L^2}^2 + \frac{c\beta^2 \kappa}{\nu} \sum_{n=0}^\infty \chi_n(t) \int_{t_n}^t \varphi(s) \rd s +\\
 + \beta\left( 1 + \frac{c\beta^3 \kappa^2}{\nu \lambda_1} \right) E_0^2,
\end{multline}
Denote
\be
 R = 2 (M_0 + E_0).
\ee

Since $\bw \in \mC([t_0,\infty);H)$, and
\[
 |\bw(t_0)|_{L^2} \leq |\bv(t_0)|_{L^2} + |\bu(t_0)|_{L^2} \leq 2 M_0 \leq R,
\]
there exists $\tau \in (t_0,\infty)$ such that 
 \be\label{boundwsmalltime}
  |\bw(t)|_{L^2} \leq 2R, \quad \forall t \in [t_0,\tau].
 \ee
Define 
 \be\label{deftildet}
  \tilde{t} = \sup \left\{ \tau \in [t_0,\infty) : \sup_{t\in [t_0,\tau]}|\bw(t)|_{L^2} \leq 2R \right\}.
 \ee
 
Suppose that $\tilde{t} < t_1$. Then, integrating \eqref{energyineqdiffform} from $t_0$ to $t \leq \tilde{t}$ and using an estimate similar to \eqref{doubleinttime}, we obtain that 
 \begin{multline}\label{energyineqwintform}
   |\bw(t)|_{L^2}^2 - |\bw(t_0)|_{L^2}^2 + \nu \int_{t_0}^t \|\bw(s)\|_{H^1}^2 \rd s \leq \\
\leq - \frac{\beta}{2} \int_{t_0}^t |\bw(s)|_{L^2}^2 \rd s + \frac{c\beta^2 \kappa^2}{\nu} \int_{t_0}^t \varphi(s) \rd s + \beta\kappa \left( 1 + \frac{c\beta^3 \kappa^2}{\nu \lambda_1} \right) E_0^2.
\end{multline}

Using Young's inequality to estimate the second term in the definition of $\varphi$ and the fact that $|\bw(t)|_{L^2} \leq R$, for all $t \in [t_0,\tilde{t}]$, we have
\begin{multline}
|\bw(t)|_{L^2}^2 - |\bw(t_0)|_{L^2}^2 + \nu \int_{t_0}^t \|\bw(s)\|_{H^1}^2 \rd s \leq - \frac{\beta}{2} \int_{t_0}^t |\bw(s)|_{L^2}^2 \rd s \\
+ \frac{c\beta^2 \kappa^2}{\nu} \int_{t_0}^t \left( \nu^2 \|\bw(s)\|_{H^1}^2 
 + \frac{(M_0 M_1)^2}{\nu^2} |\bw(s)|_{L^2}^2 + R^2 \|\bw(s)\|_{H^1}^2 + \frac{\beta^2}{\lambda_1}|\bw(s)|_{L^2}^2 \right)\rd s \\
+ \beta\kappa \left( 1 + \frac{c\beta^3 \kappa^2}{\nu \lambda_1} \right) E_0^2 .
\end{multline}
 
After some rearrangement,
 \begin{multline}\label{energyineqwintform2}
 |\bw(t)|_{L^2}^2 - |\bw(t_0)|_{L^2}^2 + \nu\left(1 - c\beta^2 \kappa^2\left( 1+\frac{R^2}{\nu^2} \right)\right) \int_{t_0}^t \|\bw(s)\|_{H^1}^2 \rd s \leq \\
 \leq -\left( \frac{\beta}{2} - c\frac{(\beta\kappa M_0 M_1)^2}{\nu^3} - \frac{c \beta^4 \kappa^2}{\nu \lambda_1}\right) \int_{t_0}^t |\bw(s)|_{L^2}^2 \rd s 
 + \beta\kappa \left( 1 + \frac{c\beta^3 \kappa^2}{\nu \lambda_1} \right) E_0^2. 
 \end{multline} 

Then, using the smallness condition \eqref{condkappamodes} on $\kappa$, with a suitable absolute constant $c$, it follows that
\be\label{energyineqwintform4}
|\bw(t)|_{L^2}^2 - |\bw(t_0)|_{L^2}^2 + \frac{\nu}{2} \int_{t_0}^t \|\bw(s)\|_{H^1}^2 \rd s \leq - \frac{\beta}{2} \int_{t_0}^t |\bw(s)|_{L^2}^2 \rd s + c E_0^2,
\ee
which implies in particular that 
\be\label{boundL2Vw}
 \int_{t_0}^t \|\bw(s)\|_{H^1}^2 \rd s \leq \frac{2}{\nu}|\bw(t_0)|_{L^2}^2 + \frac{c}{\nu} E_0^2, \quad \forall t \in [t_0,\tilde{t}].
\ee

Using Poincar\'e inequality \eqref{ineqPoincare} to estimate all the terms in the definition of $\varphi$, given in \eqref{defvarphi}, in terms of $\|\bw\|_{H^1}^2$, it follows from \eqref{energyineqdiffform1} that, for all $t \in [t_0, \tilde{t}]$,
\begin{multline}\label{energyineqdiffform2}
 \frac{\rd}{\rd t}|\bw|^2_{L^2}  \leq  -\frac{\beta}{2} |\bw|_{L^2}^2 + \frac{c\beta^2 \kappa}{\nu} \left( \nu^2 + \frac{M_0 M_1}{ \lambda_1^{1/2}} + R^2 + \frac{\beta^2}{\lambda_1^2}\right) \int_{t_0}^t \|\bw(s)\|_{H^1}^2 \rd s  +\\
  + \beta\left( 1 + \frac{c\beta^3 \kappa^2}{\nu \lambda_1} \right) E_0^2 .
\end{multline}

Estimating the right-hand side of \eqref{energyineqdiffform2} by using the bound from \eqref{boundL2Vw}, and then integrating from $t_0$ to $t \leq \tilde{t}$, it follows that
\be\label{ineqwGronwall}
 |\bw(t)|_{L^2}^2 \leq |\bw(t_0)|_{L^2}^2\Exp^{-\frac{\beta}{2}(t-t_0)} + ( \gamma|\bw(t_0)|_{L^2}^2 + \sigma E_0^2 + 2 E_0^2) ( 1 - \Exp^{-\frac{\beta}{2}(t-t_0)}), 
\ee
where
\be
 \gamma = c\beta\kappa \left( 1 + \frac{M_0 M_1}{\nu^2\lambda_1^{1/2}} + \frac{R^2}{\nu^2} + \frac{\beta^2}{(\nu\lambda_1)^2} \right)
\ee
and
\be
 \sigma = \gamma + \frac{c\beta^3 \kappa^2}{\nu\lambda_1}.
\ee

Since $|\bw(t_0)|_{L^2} \leq R$, we also have
\be\label{ineqwGronwall2}
 |\bw(t)|_{L^2}^2 \leq R^2\Exp^{-\frac{\beta}{2}(t-t_0)} + ( \gamma R^2 + \sigma E_0^2 + 2 E_0^2) ( 1 - \Exp^{-\frac{\beta}{2}(t-t_0)}).
\ee

Now, using condition \eqref{condkappamodes} on $\kappa$ with a suitable absolute constant $c$, we obtain $\gamma + \sigma \leq 1/2$, so that
\[
 \gamma R^2 + \sigma E_0^2 + 2 E_0^2 \leq \frac{R^2}{2} + 2 E_0^2 \leq R^2.
\]
Thus, it follows from \eqref{ineqwGronwall2} that
\[
 |\bw(t)|_{L^2} \leq R, \quad \forall t \in [t_0, \tilde{t}].
\]
In particular, $|\bw(\tilde{t})|_{L^2} \leq R$, and from the definition of $\tilde{t}$ in \eqref{deftildet} we conclude that $\tilde{t} \geq t_1$. Therefore, we also have $|\bw(t_1)|_{L^2} \leq R$ and we can apply the same previous arguments to obtain that $\tilde{t} \geq t_2$ and $|\bw(t_2)|_{L^2} \leq R$. Continuing inductively, we obtain that $\tilde{t} \geq t_n$, for all $n \geq 0$. Then, analogously to \eqref{ineqwGronwall}, one obtains that
\be\label{ineqwGronwall3}
 |\bw(t)|_{L^2}^2 \leq |\bw(t_n)|_{L^2}^2\Exp^{-\frac{\beta}{2}(t-t_n)} + \left( \gamma |\bw(t_n)|_{L^2}^2 + \sigma E_0^2 + 2 E_0^2\right) ( 1 - \Exp^{-\frac{\beta}{2}(t-t_n)}), 
\ee
for all $t \in [t_n,t_{n+1}]$ and for all $n \geq 0$. 

From \eqref{ineqwGronwall3}, it follows in particular that 
\[
 |\bw(t_{n+1})|_{L^2} \leq \theta |\bw(t_n)|_{L^2} + c E_0, \quad \forall n \geq 0,
\]
where
\[
 \theta = \left( \Exp^{-\frac{\beta\kappa}{2}} + \gamma ( 1 - \Exp^{-\frac{\beta\kappa}{2}})\right)^{1/2} < 1.
\]

Thus,
\be\label{ineqwGronwall4}
 |\bw(t_n)|_{L^2} \leq \theta^n|\bw(t_0)|_{L^2} + c E_0 \sum_{j = 0}^{n-1} \theta^j, \quad \forall n \geq 1.
\ee

From \eqref{ineqwGronwall3} and \eqref{ineqwGronwall4}, it follows that
\be\label{ineqwGronwall5}
 |\bw(t)|_{L^2} \leq \theta^n|\bw(t_0)|_{L^2} + c E_0 \left( 1 + \sum_{j = 0}^{n-1} \theta^j \right), \quad \forall t \in [t_n,t_{n+1}], \forall n \geq 1.
\ee

Therefore,
\[
 \limsup_{t\rightarrow \infty} |\bw(t)|_{L^2} \leq c E_0.
\]

Moreover, if $E_0 = 0$, we have from \eqref{ineqwGronwall5} that 
\[
 |\bw(t)|_{L^2} \leq \theta^n|\bw(t_0)|_{L^2},\quad \forall t \in [t_n,t_{n+1}], \forall n \geq 1,
\]
and thus $\bw(t)$ converges exponentially to $0$ in $H$ as $t \rightarrow \infty$.
\end{proof}

\subsection{The general interpolant case}\label{subsecGeneralInt}

We consider an interpolant operator $I_h: L^2(\Omega)^2 \rightarrow L^2(\Omega)^2$ satisfying properties \eqref{propintP1} and \eqref{propintP2}, given in \eqref{eqpropintP1} and \eqref{eqpropintP2}, respectively. The following theorem provides an asymptotic in time estimate of the difference between the unknown reference solution of \eqref{functionaleq} and the corresponding solution $\bv$ of equation \eqref{DataAssEqDiscTimeGeneral} satisfying an initial data $\bv_0 \in V$. 

Here, we assume that the sequence of errors $\{\eta_n\}_{n\in\mathbb{N}}$ is bounded in $H^1(\Omega)^2$. We remark that this is a natural assumption that can be verified in applications by using the given parameters associated to the model. As an example, we obtain in the Appendix an explicit bound in the $H^1(\Omega)^2$ norm of the sequence $\{\eta_n\}_{n\in\mathbb{N}}$ for the particular case of an interpolant operator given by local averages over finite volume elements. 

The main difference between the proof below and the proof of Theorem \ref{thmasympestwFourier} is that the general interpolant $I_h$ does not provide us with enough freedom for being able to work with the norm in $V'$ of the time derivative $\rd \bw/\rd t$, and we must use the norm in $L^2$ instead (compare \eqref{est5} with \eqref{estC}). Thus, in order for the calculations to make sense, $\bv$ has to be a strong solution of \eqref{DataAssEqDiscTimeGeneral}, which justifies the choice of an initial data $\bv_0$ in $V$.

\begin{thm}\label{thmasympestwGeneral}
 Let $\bu$ be a trajectory in $\mA$. Consider $\bv_0 \in B_V(M_1)$ and let $\bv$ be the unique solution of \eqref{DataAssEqDiscTimeGeneral} on $[t_0,\infty)$ satisfying $\bv(t_0) = \bv_0$. Assume that $\{\eta_n\}_{n\in\mathbb{N}}$ is a bounded sequence in $H^1(\Omega)^2$, namely, there exists a constant $E_1 \geq 0$ such that
 \be\label{bounderrorV}
  \|\eta_n\|_{H^1} \leq E_1, \quad \forall n \in \mathbb{N}.
 \ee
 If $\beta$ is large enough such that
\be\label{condbetag0}
 \beta \geq c \frac{(M_1 + E_1)^2}{\nu} \left( 1 + \log \left(\frac{M_1 + E_1}{\nu \lambda_1^{1/2}}\right)\right)
\ee
and $\kappa, h$ are small enough such that
\be\label{condkappag0}
 \kappa \leq \frac{c}{\beta} \min \left\{ 1, \frac{\nu^{3/2} \beta^{1/2}}{M_0 M_1},\frac{ \nu^2 \lambda_1^{1/2}}{ M_0 M_1}, \frac{\nu^2 \lambda_1}{(M_1 + E_1)^2}, \frac{(\nu \lambda_1)^{1/2}}{\beta^{1/2}}, \frac{(\nu \lambda_1)^2}{\beta^2} \right\},
\ee
\be\label{condh}
 h \leq \frac{1}{2c_0}\left( \frac{\nu}{\beta}\right)^{1/2},
\ee
then
\[
 \limsup_{t\rightarrow \infty} \|\bv(t) - \bu(t)\|_{H^1} \leq c E_1.
\]
Moreover, if $E_1 = 0$, then $\bv(t) \rightarrow \bu(t)$ in $V$ exponentially as $t \rightarrow \infty$.
\end{thm}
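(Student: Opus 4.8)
The plan is to mirror the structure of the proof of Theorem \ref{thmasympestwFourier}, but to carry out the energy estimate in the $V$-norm rather than in $H$, which is precisely why $\bv$ must be a strong solution and $\bv_0 \in B_V(M_1)$. Setting $\bw = \bv - \bu$ and subtracting \eqref{functionaleq} from \eqref{DataAssEqDiscTimeGeneral}, linearity of $I_h$ gives
\[
\frac{\rd \bw}{\rd t} + \nu A\bw + B(\bw,\bu) + B(\bu,\bw) + B(\bw,\bw) = -\beta \sum_{n=0}^\infty P_\sigma I_h(\bw(t_n)) \chi_n + \beta \sum_{n=0}^\infty P_\sigma \eta_n \chi_n .
\]
I would then take the $L^2$ inner product with $A\bw$ (legitimate since $\bw \in D(A)$ for a.e.\ $t$, using that trajectories in $\mA$ are strong solutions) to obtain $\tfrac{1}{2}\tfrac{\rd}{\rd t}\|\bw\|_{H^1}^2 + \nu |A\bw|_{L^2}^2$ on the left. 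The crucial algebraic step is to extract coercivity from the feedback term on each interval $[t_n,t_{n+1})$ by writing $I_h(\bw(t_n)) = \bw - (\bw - I_h(\bw)) - I_h(\bw - \bw(t_n))$, so that $-\beta(I_h(\bw(t_n)),A\bw)_{L^2} = -\beta\|\bw\|_{H^1}^2 + \beta(\bw - I_h(\bw),A\bw)_{L^2} + \beta(I_h(\bw - \bw(t_n)),A\bw)_{L^2}$, using $(\bw,A\bw)_{L^2} = \|\bw\|_{H^1}^2$. Property \eqref{propintP1} bounds the second term by $\beta c_0 h \|\bw\|_{H^1}|A\bw|_{L^2}$, which, after Young's inequality and the smallness condition \eqref{condh} on $h$, contributes at most $\tfrac{\nu}{4}|A\bw|_{L^2}^2 + \tfrac{\beta}{4}\|\bw\|_{H^1}^2$ and is therefore absorbed, leaving the genuinely dissipative term $-\tfrac{3\beta}{4}\|\bw\|_{H^1}^2$.

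Next I would bound the remaining terms. For the two advection terms $(B(\bw,\bu),A\bw)_{L^2}$ and $(B(\bu,\bw),A\bw)_{L^2}$ I would invoke the logarithmic inequalities \eqref{ineqTiti1}--\eqref{ineqTiti2}, arranging in each case for the logarithm to fall on $\bu$; since $\bu$ is a trajectory in $\mA$ (hence bounded in $D(A)$), the factor $\bigl(1 + \log(|A\bu|_{L^2}/(\lambda_1^{1/2}\|\bu\|_{H^1}))\bigr)^{1/2}$ is controlled in terms of the generalized Grashof number, producing exactly the logarithmic factor appearing in condition \eqref{condbetag0}. Young's inequality then yields $\tfrac{\nu}{4}|A\bw|_{L^2}^2 + \tfrac{cM_1^2}{\nu}(1 + \log(\cdots))\|\bw\|_{H^1}^2$, the second piece being dominated by $\tfrac{\beta}{8}\|\bw\|_{H^1}^2$ precisely by \eqref{condbetag0}. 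The error term I would handle by integration by parts, $(P_\sigma\eta_n,A\bw)_{L^2} = (\!(P_\sigma\eta_n,\bw)\!)_{H^1} \leq c\|\eta_n\|_{H^1}\|\bw\|_{H^1}$ (using boundedness of $P_\sigma$ on $H^1$), so that the bound \eqref{bounderrorV} gives a contribution $\leq \tfrac{\beta}{16}\|\bw\|_{H^1}^2 + c\beta E_1^2$; this is the reason the final estimate scales like $E_1$ and not like $\beta E_1$. Finally, the time-interpolation term $\beta(I_h(\bw - \bw(t_n)),A\bw)_{L^2}$ is where the argument departs most from the Fourier case: property \eqref{propintP2} forces me to estimate $|\bw(t) - \bw(t_n)|_{L^2} \leq \int_{t_n}^t |\tfrac{\rd \bw}{\rd s}|_{L^2}\,\rd s$ in $L^2$ rather than in $V'$ (this is exactly why strong solutions are needed), and bounding $|\tfrac{\rd \bw}{\rd s}|_{L^2}$ from the equation brings in $|A\bw|_{L^2}$ together with the cubic estimate \eqref{estnonlineartermHL4L4L2}.

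Collecting everything produces a differential inequality of the form $\tfrac{\rd}{\rd t}\|\bw\|_{H^1}^2 + \nu|A\bw|_{L^2}^2 \leq -\tfrac{\beta}{2}\|\bw\|_{H^1}^2 + \tfrac{c\beta^2\kappa}{\nu}\sum_n \chi_n \int_{t_n}^t \psi(s)\,\rd s + c\beta E_1^2$, the exact analog of \eqref{energyineqdiffform1}, with $\psi$ playing the role of $\varphi$. As in the Fourier proof I would run a continuation argument: with $R_1 \sim M_1 + E_1$, define $\tilde t = \sup\{\tau : \sup_{[t_0,\tau]}\|\bw\|_{H^1} \leq 2R_1\}$, first derive the auxiliary bound $\int_{t_0}^t |A\bw|_{L^2}^2\,\rd s \leq \tfrac{2}{\nu}\|\bw(t_0)\|_{H^1}^2 + \tfrac{c}{\nu}E_1^2$, feed it back to control the time-interpolation integral, and use the smallness conditions \eqref{condkappag0} to show $\|\bw\|_{H^1} \leq R_1$ on $[t_0,\tilde t]$, forcing $\tilde t = \infty$ by induction over the $t_n$. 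A Grönwall estimate on each $[t_n,t_{n+1}]$ then gives $\|\bw(t_{n+1})\|_{H^1} \leq \theta\|\bw(t_n)\|_{H^1} + cE_1$ with $\theta < 1$, and iterating yields $\limsup_{t\to\infty}\|\bw\|_{H^1} \leq cE_1$, with exponential decay when $E_1 = 0$. The main obstacle is controlling the cubic interaction together with the logarithmic factors: the term $(B(\bw,\bw),A\bw)_{L^2}$ carries a logarithm of $|A\bw|_{L^2}/\|\bw\|_{H^1}$ that cannot be bounded by attractor data, so it can only be absorbed into $\nu|A\bw|_{L^2}^2$ once the bootstrap bound $\|\bw\|_{H^1} \leq 2R_1$ is in force, and simultaneously the time-interpolation integral must be kept subordinate to the dissipation through the $\kappa$-smallness in \eqref{condkappag0}; balancing these two competing requirements is the delicate heart of the proof.
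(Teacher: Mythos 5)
Your proposal reproduces the paper's overall architecture faithfully: the same decomposition $-\beta(I_h(\bw(t_n)),A\bw)_{L^2} = -\beta\|\bw\|_{H^1}^2 - \beta(I_h(\bw)-\bw,A\bw)_{L^2} - \beta(I_h(\bw(t_n)-\bw),A\bw)_{L^2}$, the same use of \eqref{propintP1} with \eqref{condh}, the same $L^2$-estimate of $\rd\bw/\rd t$ forced by \eqref{propintP2} (correctly identified as the reason strong solutions are needed), and the same bootstrap-plus-iteration scheme. However, there is a genuine gap at the decisive quantitative step. The differential inequality $\tfrac{\rd}{\rd t}\|\bw\|_{H^1}^2 \leq -\tfrac{\beta}{2}\|\bw\|_{H^1}^2 + \ldots$ on which your whole continuation argument rests is never actually established: the term $(B(\bw,\bw),A\bw)_{L^2}$ (and likewise the logarithmic terms inside the time-interpolation integrand, cf.\ \eqref{defvarphiGeneral}, which must be absorbed again in integrated form as in \eqref{absorbingterms2} before you can even get your auxiliary bound \eqref{intAwbound}) carries the factor $\bigl(1+\log\bigl(|A\bw|_{L^2}/(\lambda_1^{1/2}\|\bw\|_{H^1})\bigr)\bigr)^{1/2}$, which cannot be removed by the bootstrap bound $\|\bw\|_{H^1}\leq 2R_1$ and cannot be absorbed by Young's inequality alone, since the logarithm is unbounded in $|A\bw|_{L^2}$. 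The paper's resolution is the elementary but essential lemma that $\phi(r) = r^2 - \rho r(1+\log(r^2))^{1/2} + B \geq 0$ for all $r\geq 1$ whenever $B \geq \rho^2\log(\rho^2)$, applied with $r = |A\bw|_{L^2}/(\lambda_1^{1/2}\|\bw\|_{H^1})$, $\rho = cR/(\nu\lambda_1^{1/2})$ and $B = \beta/(\nu\lambda_1)$; this is precisely where \eqref{condbetag0} enters. You name this difficulty as ``the delicate heart of the proof'' but never supply the mechanism, so the proof does not close.

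A second, related error: your treatment of the linear advection terms puts the logarithm on $\bu$ via \eqref{ineqTiti1}--\eqref{ineqTiti2} and claims this ``produces exactly the logarithmic factor appearing in condition \eqref{condbetag0}.'' That is not so. First, the paper's framework (Proposition \ref{propunifbounds}, bounds \eqref{unifboundsattractor}) provides only $H$- and $V$-bounds on $\mA$; a uniform $D(A)$-bound is true for $\bg\in H$ but would have to be proved separately and would introduce a new constant, say $M_2$, so the resulting largeness condition on $\beta$ would involve $M_2$ rather than the factor $\log\bigl((M_1+E_1)/(\nu\lambda_1^{1/2})\bigr)$ of \eqref{condbetag0} — that factor arises from the condition $B\geq\rho^2\log(\rho^2)$ in the absorption lemma, not from any bound on $|A\bu|_{L^2}$. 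Second, the detour is unnecessary: once the absorption lemma is in hand (and you need it anyway for the quadratic term), it handles all three bilinear terms uniformly with the logarithm on $\bw$, exactly as in \eqref{est1}--\eqref{est3} and \eqref{absorbingterms} of the paper's proof. Supplying that lemma, and rerouting the advection terms through it, is what is required to turn your outline into a complete proof.
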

\begin{proof}
 Denote $\bw = \bv - \bu$. Subtracting \eqref{DataAssEqDiscTimeGeneral} from \eqref{functionaleq}, we obtain that
 \be\label{wEqDiscDataAssGeneral}
  \frac{\rd \bw}{\rd t} + \nu A\bw + B(\bw,\bu) + B(\bu,\bw) + B(\bw,\bw) = -\beta \sum_{n=0}^\infty P_{\sigma}I_h (\bw(t_n))\chi_n + \beta\sum_{n=0}^\infty P_{\sigma} \eta_n \chi_n,
 \ee
 which holds in $L^2_{\textnormal{loc}}(t_0,\infty;H)$.
 
Taking the inner product in $H$ of \eqref{wEqDiscDataAssGeneral} with $A\bw$, we have
 \begin{multline}\label{energyeqwGeneral}
  \frac{1}{2} \frac{\rd}{\rd t} \|\bw\|^2_{H^1} + \nu |A\bw|^2_{L^2} = \\
  = - ( B(\bw,\bu),A\bw )_{L^2} - ( B(\bu,\bw),A\bw )_{L^2} - ( B(\bw,\bw),A\bw )_{L^2}  - \\
 -\beta \sum_{n=0}^\infty (I_h(\bw(t_n)),A\bw)_{L^2}\chi_n + \beta \sum_{n=0}^\infty (\eta_n, A\bw)_{L^2} \chi_n \\
=  - ( B(\bw,\bu),A\bw )_{L^2} - ( B(\bu,\bw),A\bw )_{L^2} - ( B(\bw,\bw),A\bw )_{L^2}  - \\
-\beta \|\bw\|_{H^1}^2 -\beta (I_h(\bw) - \bw, A\bw)_{L^2} -\beta\sum_{n=0}^\infty (I_h(\bw(t_n) - \bw), A\bw)_{L^2} \chi_n + \\
+ \beta \sum_{n=0}^\infty (\eta_n, A\bw)_{L^2} \chi_n.
 \end{multline}
 
Next, we estimate each term on the right-hand side of \eqref{energyeqwGeneral}. We use the bounds for $\bu$ given in \eqref{unifboundsattractor}.

Using inequalities \eqref{ineqTiti1} and \eqref{ineqTiti2}, we obtain that
\be\label{est1}
 |( B(\bw,\bu),A\bw )_{L^2}| \leq c_T M_1 \|\bw\|_{H^1} |A\bw|_{L^2} \left( 1 + \log\left( \frac{|A\bw|_{L^2}}{\lambda_1^{1/2} \|\bw\|_{H^1}} \right) \right)^{1/2},
\ee
\begin{multline}\label{est2}
 |( B(\bu,\bw),A\bw )_{L^2}| \leq c_T M_1 \|\bw\|_{H^1} |A\bw|_{L^2} \left( 1 + \log\left( \frac{|A\bw|_{L^2}}{\lambda_1^{1/2} \|\bw\|_{H^1}} \right) \right)^{1/2}
\end{multline}
and
\be\label{est3}
 |( B(\bw,\bw),A\bw )_{L^2}| \leq c_T \|\bw\|_{H^1}^2 |A\bw|_{L^2} \left( 1 + \log\left( \frac{|A\bw|_{L^2}}{\lambda_1^{1/2} \|\bw\|_{H^1}} \right) \right)^{1/2}.
\ee

Using property \eqref{propintP1} of $I_h$, it follows that 
\begin{multline}\label{est3a}
 \beta |(I_h(\bw) - \bw, A\bw)_{L^2}| \leq c_0 \beta h\|\bw\|_{H^1} |A\bw|_{L^2} \\ \leq \frac{\beta}{4}\|\bw\|_{H^1}^2 + \beta c_0^2 h^2 |A\bw|_{L^2}^2 
 \leq \frac{\beta}{4}\|\bw\|_{H^1}^2 + \frac{\nu}{4}|A\bw|_{L^2}^2,
\end{multline}
where we used hypothesis \eqref{condh}.

Also, using hypothesis \eqref{bounderrorV}, we have that
\be\label{est4}
 \beta |(\eta_n, A\bw)_{L^2}| \leq \beta \|\eta_n\|_{H^1} \|\bw\|_{H^1} \leq \beta E_1^2 + \frac{\beta}{4}\|\bw\|_{H^1}^2.
\ee

Moreover, using property \eqref{propintP2} of $I_h$, we obtain that
\begin{multline}\label{est5}
 \beta |(I_h (\bw(t_n) - \bw(t)), A\bw(t))| \leq \beta |A\bw(t)|_{L^2} c_1 \int_{t_n}^t \left| \frac{\rd \bw}{\rd s}(s)\right|_{L^2} \rd s \\
 \leq \frac{\nu}{4}|A\bw(t)|_{L^2}^2 + \frac{c\beta^2}{\nu} \left( \int_{t_n}^t \left| \frac{\rd \bw}{\rd s}(s)\right|_{L^2} \rd s\right)^2
\end{multline}

Now, from \eqref{wEqDiscDataAssGeneral} and using property \eqref{propintP2} of $I_h$ and Poincar\'e inequality \eqref{ineqPoincare}, we obtain that, for all $t \in [t_n, t_{n+1})$,
\begin{multline}\label{est6}
 \left|\frac{\rd \bw}{\rd s}(s)\right|_{L^2} \leq \nu |A \bw(s)|_{L^2} + |B(\bw(s),\bu(s))|_{L^2} + |B(\bu(s),\bw(s))|_{L^2} + \\
 + |B(\bw(s),\bw(s))|_{L^2} + \beta |I_h(\bw(t_n) - \bw(t))|_{L^2} + \beta |I_h(\bw(t))|_{L^2} + \beta|\eta_n|_{L^2} \\
 \leq \nu|A\bw(s)|_{L^2} + |B(\bw(s),\bu(s))|_{L^2} + |B(\bu(s),\bw(s))|_{L^2} +  |B(\bw(s),\bw(s))|_{L^2} +\\
 + c_1 \beta \int_{t_n}^s \left| \frac{\rd \bw}{\rd \tau}(\tau)\right|_{L^2} \rd \tau + c_1 \beta |\bw(s)|_{L^2} + \beta \frac{E_1}{\lambda_1^{1/2}}. 
\end{multline}

Integrating with respect to $s$, between $t_n$ and $t \in [t_n, t_{n+1})$, yields
\begin{multline}\label{est7}
 \int_{t_n}^t \left|\frac{\rd \bw}{\rd s}(s)\right|_{L^2} \rd s \leq \int_{t_n}^t \left( \nu|A\bw(s)|_{L^2} + |B(\bw(s),\bu(s))|_{L^2} + |B(\bu(s),\bw(s))|_{L^2} + \right. \\
 \left. + |B(\bw(s),\bw(s))|_{L^2} + c_1 \beta |\bw(s)|_{L^2} + \beta \frac{E_1}{\lambda_1^{1/2}} \right) \rd s + c_1 \beta\kappa \int_{t_n}^t \left| \frac{\rd \bw}{\rd s}(s)\right|_{L^2} \rd s, 
\end{multline}
where we used an estimate similar to \eqref{doubleinttime}.

From the smallness condition \eqref{condkappag0} on $\kappa$, with $c \leq 1/(2c_1)$, we have in particular that $\beta \kappa \leq 1/(2c_1)$. It thus follows from \eqref{est7} that
\begin{multline}\label{est8}
 \int_{t_n}^t \left|\frac{\rd \bw}{\rd s}(s)\right|_{L^2} \rd s \leq c \int_{t_n}^t \left( \nu|A\bw(s)|_{L^2} + |B(\bw(s),\bu(s))|_{L^2} + |B(\bu(s),\bw(s))|_{L^2} + \right. \\
 \left. + |B(\bw(s),\bw(s))|_{L^2} + c_1 \beta |\bw(s)|_{L^2} + \beta \frac{E_1}{\lambda_1^{1/2}} \right) \rd s.
\end{multline}

Using \eqref{estnonlineartermHfunc}, we obtain
\be\label{est9}
|B(\bw,\bu)|_{L^2} \leq c_B M_1 \|\bw\|_{H^1} \left( 1 + \log \left( \frac{|A\bw|_{L^2}}{\lambda_1^{1/2} \|\bw\|_{H^1}} \right)\right)^{1/2},
\ee
\be\label{est10}
|B(\bw,\bw)|_{L^2} \leq c_B \|\bw\|_{H^1}^2 \left( 1 + \log \left( \frac{|A\bw|_{L^2}}{\lambda_1^{1/2} \|\bw\|_{H^1}} \right)\right)^{1/2}.
\ee
And using \eqref{estnonlineartermHL4L4L2}, we have
\be\label{est11}
|B(\bu,\bw)|_{L^2} \leq c_L^2 (M_0 M_1)^{1/2} \|\bw\|_{H^1}^{1/2} |A \bw|_{L^2}^{1/2}.
\ee

Now, applying H\"older inequality and using estimates \eqref{est9}-\eqref{est11}, it follows from \eqref{est8} that
\be\label{est12}
 \left( \int_{t_n}^t \left|\frac{\rd \bw}{\rd s}(s)\right|_{L^2} \rd s \right)^2 \leq c \kappa \int_{t_n}^t \varphi(s) \rd s + c \frac{(\kappa \beta E_1)^2}{\lambda_1},
\ee
where
\begin{multline}\label{defvarphiGeneral}
 \varphi(s) = \nu^2 |A\bw(s)|_{L^2}^2 + M_1^2 \|\bw(s)\|_{H^1}^2 \left( 1 + \log \left( \frac{|A\bw(s)|_{L^2}}{\lambda_1^{1/2} \|\bw(s)\|_{H^1}} \right)\right) + \\
 + M_0 M_1 \|\bw(s)\|_{H^1} |A\bw(s)|_{L^2} + \|\bw(s)\|_{H^1}^4 \left( 1 + \log \left( \frac{|A\bw(s)|_{L^2}}{\lambda_1^{1/2} \|\bw(s)\|_{H^1}} \right)\right) + \\
 + \beta^2 |\bw(s)|_{L^2}^2.
\end{multline}

Hence, using estimates \eqref{est1}-\eqref{est5} and \eqref{est12} into equation \eqref{energyeqwGeneral}, it follows that
\begin{multline}\label{energyineqdiffformGeneral}
 \frac{\rd}{\rd t} \|\bw(t)\|^2_{H^1} + \nu |A\bw(t)|^2_{L^2} \leq -\beta \|\bw(t)\|_{H^1}^2 \\
 + c M_1 \|\bw(t)\|_{H^1} |A\bw(t)|_{L^2} \left( 1 + \log\left( \frac{|A\bw(t)|_{L^2}}{\lambda_1^{1/2} \|\bw(t)\|_{H^1}} \right) \right)^{1/2} \\
 + c \|\bw(t)\|_{H^1}^2 |A\bw(t)|_{L^2} \left( 1 + \log\left( \frac{|A\bw(t)|_{L^2}}{\lambda_1^{1/2} \|\bw(t)\|_{H^1}} \right) \right)^{1/2} \\
 + \frac{c\beta^2\kappa}{\nu} \sum_{n=0}^\infty \chi_n(t) \int_{t_n}^t \varphi(s) \rd s + \beta \left(2 + \frac{c \beta^3 \kappa^2}{\nu\lambda_1} \right) E_1^2,
\end{multline}
for all $t \in [t_0,\infty)$.

Consider $R \geq 0$ given by
\[
 R = 2 M_1 + 3E_1.
\]

Since $\bw \in \mC([t_0,\infty);V)$ and 
\[
 \|\bw(t_0)\|_{H^1} \leq \|\bv(t_0)\|_{H^1} + \|\bu(t_0)\|_{H^1} \leq 2 M_1 \leq R,
\]
there exists $\tau \in (t_0,\infty)$ such that 
\be\label{boundVnormw}
 \|\bw(t)\|_{H^1} \leq 2R, \quad \forall t \in [t_0,\tau].
\ee
Define
\be\label{deftildetGeneral}
 \tilde{t} = \left\{ \tau \in [t_0,\infty) : \sup_{t\in [t_0,\tau]} \|\bw(t)\|_{H^1} \leq 2R \right\}.
\ee

Assume that $\tilde{t} < t_1$. 

Using the definitions of $R$ and $\tilde{t}$, we have that, for all $t \in [t_0, \tilde{t}]$,
\begin{multline}\label{absorbingterms}
 \frac{\nu}{2} |A\bw|_{L^2}^2 - c M_1 \|\bw\|_{H^1} |A\bw|_{L^2} \left( 1 + \log\left( \frac{|A\bw|_{L^2}}{\lambda_1^{1/2} \|\bw\|_{H^1}} \right) \right)^{1/2} \\
 - c \|\bw\|_{H^1}^2 |A\bw|_{L^2} \left( 1 + \log\left( \frac{|A\bw|_{L^2}}{\lambda_1^{1/2} \|\bw\|_{H^1}} \right) \right)^{1/2} + \frac{\beta}{2} \|\bw\|_{H^1}^2 \\
 \geq \frac{\nu \lambda_1}{2} \|\bw\|_{H^1}^2 \left[ \frac{|A\bw|_{L^2}^2}{\lambda_1 \|\bw\|_{H^1}^2} - c \frac{R}{\nu \lambda_1^{1/2}} \frac{|A\bw|_{L^2}}{\lambda_1^{1/2} \|\bw\|_{H^1}}  \left( 1 + \log\left( \frac{|A\bw|_{L^2}^2}{\lambda_1 \|\bw\|_{H^1}^2} \right) \right)^{1/2}\right.\\
 \left. + \frac{\beta}{\nu \lambda_1} \right]
\end{multline}

Define
\be
\phi(r) = r^2 - \rho r ( 1 + \log (r^2) )^{1/2} + B, \quad r \geq 1,
\ee
where 
\be\label{defrhoB}
\rho = c \frac{R}{\nu \lambda_1^{1/2}}, \quad B = \frac{\beta}{\nu \lambda_1}.
\ee

Note that 
\be\label{phi}
\phi(r) = \frac{r( \widetilde{\phi}(r^2) + B) + \rho(1 + \log (r^2))^{1/2}}{r + \rho(1 + \log (r^2))^{1/2}},
\ee
where 
\be\label{defphitilde}
\widetilde{\phi}(r) = r - {\rho}^2(1 + \log r).
\ee

It is not difficult to verify that
\be\label{minphi}
\min_{r\geq 1} \widetilde{\phi}(r) \geq -{\rho}^2 \log ({\rho}^2).
\ee

Thus, from \eqref{phi} and \eqref{minphi}, it follows that if
\be\label{condrhoB}
 B \geq {\rho}^2 \log ({\rho}^2),
\ee
then 
\be\label{phipositive}
\phi(r) \geq 0, \quad \forall r \geq 1.
\ee

Notice that, by the definition of $\rho$ and $B$ in \eqref{defrhoB}, \eqref{condrhoB} is valid due to hypothesis \eqref{condbetag0} on $\beta$. 

Hence, using \eqref{phipositive} with 
\be\label{defr}
r = \frac{|A\bw|_{L^2}}{\lambda_1^{1/2} \|\bw\|_{H^1}} \geq 1, 
\ee
we conclude that the expression on the right-hand side of \eqref{absorbingterms} is non-negative, for all $t \in [t_0, \tilde{t}]$. Using this fact in \eqref{energyineqdiffformGeneral}, we obtain that, for all $t \in [t_0, \tilde{t}]$,
\begin{multline}\label{energyineqdiffformGeneral2}
  \frac{\rd}{\rd t} \|\bw(t)\|^2_{H^1} + \frac{\nu}{2} |A\bw(t)|^2_{L^2} \leq -\frac{\beta}{2} \|\bw(t)\|_{H^1}^2  + \frac{c\beta^2\kappa}{\nu} \int_{t_0}^t \varphi(s) \rd s \\
+ \beta \left(2 + \frac{c \beta^3 \kappa^2}{\nu\lambda_1} \right) E_1^2.
\end{multline}

Integrating in time from $t_0$ to $t \in (t_0,\tilde{t}]$ and using the definition of $\varphi$ given in \eqref{defvarphiGeneral}, we obtain that
\begin{multline}\label{energyineqwintformGeneral1}
 \|\bw(t)\|_{H^1}^2 - \|\bw(t_0)\|_{H^1}^2 + \frac{\nu}{2} \int_{t_0}^t |A\bw(s)|_{L^2}^2 \rd s \leq \\
 \leq - \frac{\beta}{2} \int_{t_0}^t \|\bw(s)\|_{H^1}^2 \rd s + \frac{c\beta^2\kappa^2}{\nu} \int_{t_0}^t \varphi(s) \rd s
 + \beta\kappa \left(2 + \frac{c \beta^3 \kappa^2}{\nu\lambda_1} \right) E_1^2 \\
 \leq - \frac{\beta}{2} \int_{t_0}^t \|\bw(s)\|_{H^1}^2 \rd s + \frac{c\beta^2\kappa^2}{\nu} \int_{t_0}^t 
 \left[ \nu^2 |A\bw(s)|_{L^2}^2 + \right.\\
 M_1^2 \|\bw(s)\|_{H^1}^2 \left( 1 + \log \left( \frac{|A\bw(s)|_{L^2}}{\lambda_1^{1/2} \|\bw(s)\|_{H^1}} \right)\right) +  M_0 M_1 \|\bw(s)\|_{H^1} |A\bw(s)|_{L^2} \\
 \left. + \|\bw(s)\|_{H^1}^4 \left( 1 + \log \left( \frac{|A\bw(s)|_{L^2}}{\lambda_1^{1/2} \|\bw(s)\|_{H^1}} \right)\right) + \beta^2 |\bw(s)|_{L^2}^2 \right] \rd s \\
 + \beta\kappa \left(2 + \frac{c \beta^3 \kappa^2}{\nu\lambda_1} \right) E_1^2 .
\end{multline}

Using that $\beta\kappa \leq c$, due to \eqref{condkappag0}, we observe that
\begin{multline}\label{absorbingterms2}
 \frac{\nu}{4} \int_{t_0}^t |A\bw|_{L^2}^2 \rd s - \frac{c\beta^2\kappa^2}{\nu} \int_{t_0}^t \left[ M_1^2 \|\bw\|_{H^1}^2 \left( 1 + \log \left( \frac{|A\bw|_{L^2}}{\lambda_1^{1/2} \|\bw\|_{H^1}} \right)\right)  \right.\\
 \left. + \|\bw\|_{H^1}^4 \left( 1 + \log \left( \frac{|A\bw|_{L^2}}{\lambda_1^{1/2} \|\bw\|_{H^1}} \right)\right) \right] + \frac{\beta}{4}\int_{t_0}^t \|\bw\|_{H^1}^2 \rd s \\
 \geq \frac{\nu \lambda_1}{4} \int_{t_0}^t \|\bw\|_{H^1}^2\left[ \frac{|A\bw|_{L^2}^2}{\lambda_1 \|\bw\|_{H^1}^2} - \frac{c R^2}{\nu^2 \lambda_1} \left(1 + \log \left( \frac{|A\bw|_{L^2}^2}{\lambda_1 \|\bw\|_{H^1}^2} \right)\right) + \frac{\beta}{\nu \lambda_1} \right] \rd s
\end{multline}

Now using \eqref{defphitilde}-\eqref{phipositive}, we conclude that, due to condition \eqref{condbetag0} on $\beta$, the right-hand side of \eqref{absorbingterms2} is non-negative, for all $t \in [t_0, \tilde{t}]$. From \eqref{energyineqwintformGeneral1}, we then obtain
\begin{multline}
 \|\bw(t)\|_{H^1}^2 - \|\bw(t_0)\|_{H^1}^2 + \frac{\nu}{4} \int_{t_0}^t |A\bw(s)|_{L^2}^2 \rd s \leq - \frac{\beta}{4} \int_{t_0}^t \|\bw(s)\|_{H^1}^2 \rd s \\
 + \frac{c\beta^2\kappa^2}{\nu} \int_{t_0}^t \left( \nu^2 |A\bw(s)|_{L^2}^2 +  M_0 M_1 \|\bw(s)\|_{H^1} |A\bw(s)|_{L^2} + \beta^2 |\bw(s)|_{L^2}^2 \right) \rd s \\
 + \beta\kappa \left(2 + \frac{c \beta^3 \kappa^2}{\nu\lambda_1} \right) E_1^2.
\end{multline}

Using Young's inequality and Poincar\'e inequality \eqref{ineqPoincare}, we obtain, after some rearrangement of terms, that
\begin{multline}
 \|\bw(t)\|_{H^1}^2 - \|\bw(t_0)\|_{H^1}^2 + \frac{\nu}{4} ( 1 - c\beta^2 \kappa^2 ) \int_{t_0}^t |A\bw(s)|_{L^2}^2 \rd s \leq \\
 \leq -\frac{\beta}{4} \left( 1 - c \frac{\beta \kappa^2 (M_0 M_1)^2}{\nu^3} - c\frac{\beta^3 \kappa^2}{\nu \lambda_1} \right) \int_{t_0}^t \|\bw(s)\|_{H^1}^2 \rd s + \beta\kappa \left(2 + \frac{c \beta^3 \kappa^2}{\nu\lambda_1} \right) E_1^2.
\end{multline}
Now, from hypothesis \eqref{condkappag0}, it follows that
\be
 \|\bw(t)\|_{H^1}^2 - \|\bw(t_0)\|_{H^1}^2 + \frac{\nu}{8} \int_{t_0}^t |A\bw(s)|_{L^2}^2 \rd s \leq -\frac{\beta}{8} \int_{t_0}^t \|\bw(s)\|_{H^1}^2 \rd s + c E_1^2,
\ee
which implies in particular
\be\label{intAwbound}
\int_{t_0}^t |A\bw(s)|_{L^2}^2 \rd s \leq \frac{c}{\nu} \|\bw(t_0)\|_{H^1}^2 + \frac{c}{\nu} E_1^2.
\ee

Using Poincar\'e inequality \eqref{ineqPoincare} and the fact that $1+ \log r \leq r$, for all $r > 0$, in order to estimate the terms in the definition of $\varphi$ given in \eqref{defvarphiGeneral}, it follows from \eqref{energyineqdiffformGeneral2} that
\begin{multline}
  \frac{\rd}{\rd t} \|\bw(t)\|^2_{H^1} \leq -\frac{\beta}{2} \|\bw(t)\|_{H^1}^2 + \frac{c\beta^2\kappa}{\nu} \left( \nu^2 + \frac{R^2}{\lambda_1} + \frac{M_0 M_1}{\lambda_1^{1/2}} + \frac{\beta^2}{\lambda_1^2}\right) \int_{t_0}^t |A\bw(s)|_{L^2}^2 \rd s \\
  + \beta\left(2 + \frac{c \beta^3 \kappa^2}{\nu\lambda_1} \right) E_1^2 \\
  \leq  -\frac{\beta}{2} \|\bw(t)\|_{H^1}^2 + c\beta^2 \kappa \left(1 + \frac{M_0 M_1}{\nu^2\lambda_1^{1/2}} + \frac{R^2}{\nu^2\lambda_1} + \frac{\beta^2}{(\nu\lambda_1)^2} \right) (\|\bw(t_0)\|_{H^1}^2 + E_1^2)  \\
  + \beta\left(2+ \frac{c \beta^3 \kappa^2}{\nu\lambda_1} \right) E_1^2,
\end{multline}
where in the second inequality we used the bound from \eqref{intAwbound}.

Integrating in time from $t_0$ to $t \in [t_0, \tilde{t}]$, we have
\be\label{ineqwGronwallGeneral}
 \|\bw(t)\|_{H^1}^2 \leq \|\bw(t_0)\|_{H^1}^2\Exp^{-\frac{\beta}{2}(t-t_0)} + ( \gamma\|\bw(t_0)\|_{H^1}^2 + \sigma E_1^2 + 4 E_1^2) ( 1 - \Exp^{-\frac{\beta}{2}(t-t_0)}), 
\ee
where
\be
 \gamma = c\beta\kappa \left( 1 + \frac{M_0 M_1}{\nu^2 \lambda_1^{1/2}} + \frac{R^2}{\nu^2\lambda_1} + \frac{\beta^2}{(\nu\lambda_1)^2} \right)
\ee
and
\be
 \sigma = \gamma + \frac{c\beta^3 \kappa^2}{\nu\lambda_1}.
\ee

Since $\|\bw(t_0)\|_{H^1} \leq R$, we also have
\be\label{ineqwGronwallGeneral2}
 \|\bw(t)\|_{H^1}^2 \leq R^2\Exp^{-\frac{\beta}{2}(t-t_0)} + ( \gamma R^2 + \sigma E_1^2 + 4 E_1^2) ( 1 - \Exp^{-\frac{\beta}{2}(t-t_0)}).
\ee

Using condition \eqref{condkappag0} on $\kappa$, with a suitable absolute constant $c$, we obtain $\gamma + \sigma \leq 1/2$, which implies
\[
 \gamma R^2 + \sigma E_1^2 + 4 E_1^2 \leq \frac{R^2}{2} + 4 E_1^2 \leq R^2.
\]
Thus, it follows from \eqref{ineqwGronwallGeneral2} that
\[
 \|\bw(t)\|_{H^1} \leq R, \quad \forall t \in [t_0, \tilde{t}].
\]

In particular, $\|\bw(\tilde{t})\|_{H^1} \leq R$, and from the definition of $\tilde{t}$ in \eqref{deftildetGeneral} we conclude that, in fact, $\tilde{t} \geq t_1$. Therefore, we also have $\|\bw(t_1)\|_{H^1} \leq R$ and we can apply the same previous arguments to obtain that $\tilde{t} \geq t_2$ and $\|\bw(t_2)\|_{H^1} \leq R$. Proceeding inductively, we obtain that $\tilde{t} \geq t_n$, for all $n \geq 0$, so that, in fact,
\be\label{unifboundwH1}
 \|\bw(t)\|_{H^1} \leq R, \quad \forall t \in [t_0, \infty).
\ee
Also, analogously to \eqref{ineqwGronwallGeneral}, we obtain that
\be\label{ineqwGronwallGeneral3}
 \|\bw(t)\|_{H^1}^2 \leq \|\bw(t_n)\|_{H^1}^2\Exp^{-\frac{\beta}{2}(t-t_n)} + \left( \gamma \|\bw(t_n)\|_{H^1}^2 + \sigma E_1^2 + 4 E_1^2 \right) ( 1 - \Exp^{-\frac{\beta}{2}(t-t_n)}), 
\ee
for all $t \in [t_n,t_{n+1}]$ and for all $n \geq 0$. 

From \eqref{ineqwGronwallGeneral3}, it follows in particular that 
\[
 \|\bw(t_{n+1})\|_{H^1} \leq \theta \|\bw(t_n)\|_{H^1} +c E_1, \quad \forall n \geq 0,
\]
where
\be\label{defthetaGeneral}
 \theta = \left( \Exp^{-\frac{\beta\kappa}{2}} + \gamma ( 1 - \Exp^{-\frac{\beta\kappa}{2}})\right)^{1/2} < 1.
\ee

Thus,
\be\label{ineqwGronwallGeneral4}
 \|\bw(t_n)\|_{H^1} \leq \theta^n \|\bw(t_0)\|_{H^1} + c E_1 \sum_{j = 0}^{n-1} \theta^j, \quad \forall n \geq 1.
\ee

From \eqref{ineqwGronwallGeneral3} and \eqref{ineqwGronwallGeneral4}, it follows that
\be\label{ineqwGronwallGeneral5}
 \|\bw(t)\|_{H^1} \leq \theta^n \|\bw(t_0)\|_{H^1} + c E_1 \left( 1 + \sum_{j = 0}^{n-1} \theta^j \right), \quad \forall t \in [t_n,t_{n+1}], \forall n \geq 1.
\ee

Therefore,
\[
 \limsup_{t\rightarrow \infty} \|\bw(t)\|_{H^1} \leq c E_1.
\]

In particular, if $E_1 = 0$, it follows from \eqref{ineqwGronwallGeneral5} that 
\[
  \|\bw(t)\|_{H^1} \leq \theta^n \|\bw(t_0)\|_{H^1}, \quad \forall t \in [t_n,t_{n+1}], \forall n \geq 1,
\]
so that $\bw(t)$ converges exponentially to $0$ in $V$ as $t \rightarrow \infty$.
\end{proof}


\begin{rmk}
 With the uniform bound obtained for $\bw = \bv - \bu$ in \eqref{unifboundwH1}, in the proof of Theorem \ref{thmasympestwGeneral}, we conclude that, under the hypotheses of this Theorem, the solution $\bv$ of the discrete data assimilation algorithm \eqref{DataAssEqDiscTimeGeneral} corresponding to an initial data $\bv_0 \in B_V(M_1)$ satisfies $\bv \in L^{\infty}(t_0,\infty;V)$, with
 \be\label{unifboundvH1}
  \|\bv(t)\|_{H^1} \leq 3 (M_1 + E_1), \quad \forall t \in [t_0, \infty).
 \ee
\end{rmk}

\section{Stationary Statistical Analysis}\label{secStatAnal}

Most of the usual applications of data assimilation are given in the context of fully developed turbulent flows, the classical example being the atmosphere. In such flows, the instantaneous physical quantities display an unpredictable and erratic behavior in time, while the averages (in space, in time or with respect to an ensemble of experiments) of such quantities behave in a more regular way. For this reason, it is common for experimentalists to obtain measurements of such flows given by averages of the associated physical quantities.

Therefore, it is useful to obtain a relation between averages of physical quantities associated to a reference solution of the underlying evolution system and averages of the same quantities associated to the corresponding approximating solution, given by the data assimilation algorithm.

In the following result, we obtain an estimate of the difference between time averages of physical quantities associated to a solution $\bu$ of \eqref{functionaleq} and the corresponding solution $\bv$ of \eqref{DataAssEqDiscTimeGeneral} with respect to an initial data $\bv_0 \in V$. The physical quantity is represented by a function $\Phi: H \to \mathbb{R}$, which is assumed to be a Lipschitz continuous function, with respect to the $L^2$ norm, when restricted to a suitable ball in $V$. The proof follows from the estimate of the difference between $\bv$ and $\bu$ obtained in Theorem \ref{thmasympestwGeneral}.

\begin{thm}\label{thmasympestdifftimeav}
  Under the hypotheses of Theorem \ref{thmasympestwGeneral}, for every function $\Phi: H \rightarrow \mathbb{R}$ such that its restriction to the ball $B_V(3( M_1 + E_1))$ is Lipschitz continuous with respect to the $L^2$ norm, with Lipschitz constant $L_{\Phi} > 0$, it holds
 \be\label{asympestdifftimeav}
  \limsup_{T \rightarrow \infty} \left| \frac{1}{T}\int_{t_0}^{t_0 + T} \Phi(\bv(t)) \rd t - \frac{1}{T}\int_{t_0}^{t_0 + T} \Phi(\bu(t)) \rd t \right| \leq c\frac{L_{\Phi} E_1}{\lambda_1^{1/2}}.
 \ee
\end{thm}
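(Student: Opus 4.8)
The plan is to reduce the difference of time averages to a time average of $|\bv(t)-\bu(t)|_{L^2}$ via the Lipschitz hypothesis, and then to combine the pointwise asymptotic estimate of Theorem \ref{thmasympestwGeneral} with an elementary fact about Ces\`aro-type averages.

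First I would check that both trajectories remain in the ball on which $\Phi$ is Lipschitz. Since $\bu$ is a trajectory in $\mA$, the uniform bounds \eqref{unifboundsattractor} give $\|\bu(t)\|_{H^1} \leq M_1 \leq 3(M_1+E_1)$ for all $t \geq t_0$, while the Remark following Theorem \ref{thmasympestwGeneral}, specifically \eqref{unifboundvH1}, gives $\|\bv(t)\|_{H^1} \leq 3(M_1+E_1)$ for all $t \geq t_0$. Hence $\bu(t), \bv(t) \in B_V(3(M_1+E_1))$, and the Lipschitz hypothesis yields the pointwise bound
\[
 |\Phi(\bv(t)) - \Phi(\bu(t))| \leq L_\Phi |\bv(t) - \bu(t)|_{L^2}, \quad \forall t \geq t_0.
\]
Averaging, I would bound the quantity inside the $\limsup$ in \eqref{asympestdifftimeav} by $\frac{L_\Phi}{T}\int_{t_0}^{t_0+T} |\bv(t)-\bu(t)|_{L^2}\,\rd t$.

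Next I would convert the asymptotic $H^1$ estimate into an $L^2$ estimate. Writing $\bw = \bv - \bu$, the Poincar\'e inequality \eqref{ineqPoincare} gives $|\bw(t)|_{L^2} \leq \lambda_1^{-1/2}\|\bw(t)\|_{H^1}$, so the conclusion $\limsup_{t\to\infty}\|\bw(t)\|_{H^1} \leq cE_1$ of Theorem \ref{thmasympestwGeneral} implies $\limsup_{t\to\infty} |\bw(t)|_{L^2} \leq c\,E_1/\lambda_1^{1/2}$. The remaining step is the elementary observation that for any nonnegative $g \in C([t_0,\infty))$ one has $\limsup_{T\to\infty}\frac{1}{T}\int_{t_0}^{t_0+T} g(t)\,\rd t \leq \limsup_{t\to\infty} g(t)$: given $\varepsilon>0$, choose $T_\varepsilon$ with $g(t)\leq \limsup g + \varepsilon$ for $t\geq T_\varepsilon$, split the integral at $T_\varepsilon$, note the integral over the fixed interval $[t_0,T_\varepsilon]$ is bounded and divided by $T\to\infty$ vanishes, and let $\varepsilon \to 0$. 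Applying this with $g(t) = L_\Phi |\bw(t)|_{L^2}$ gives exactly the right-hand side $c\,L_\Phi E_1/\lambda_1^{1/2}$.

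I do not expect a genuine obstacle here; the proof is essentially a concatenation of results already established. The only point requiring a little care is ensuring that $|\bw(\cdot)|_{L^2}$ is integrable on bounded intervals so that the Ces\`aro lemma applies, which follows from $\bu,\bv \in C([t_0,\infty);H)$ (and hence $\bw \in C([t_0,\infty);H)$) guaranteed by Theorem \ref{thmexistuniqweaksolDiscDataAss} and the regularity of trajectories in $\mA$. Assembling the three bounds in order then yields \eqref{asympestdifftimeav}.
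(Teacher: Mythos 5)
Your proof is correct, but the final step takes a genuinely different route from the paper. Both arguments begin identically: the uniform bounds \eqref{unifboundsattractor} and \eqref{unifboundvH1} place $\bu(t)$ and $\bv(t)$ in $B_V(3(M_1+E_1))$, the Lipschitz hypothesis reduces the difference of averages to $\frac{L_\Phi}{T}\int_{t_0}^{t_0+T}|\bw(t)|_{L^2}\,\rd t$, and Poincar\'e converts this to the $H^1$ norm. From there the paper does \emph{not} invoke the conclusion of Theorem \ref{thmasympestwGeneral} as a black box; instead it re-enters that theorem's proof, uses the explicit geometric-decay inequalities \eqref{ineqwGronwallGeneral3} and \eqref{ineqwGronwallGeneral5}, partitions $[t_0,t_0+T]$ at the measurement times $t_n$, and sums the resulting geometric series in $\theta$ before taking $\limsup_{T\to\infty}$. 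You instead use only the stated asymptotic bound $\limsup_{t\to\infty}\|\bw(t)\|_{H^1}\le cE_1$ together with the elementary Ces\`aro fact that $\limsup_{T\to\infty}\frac{1}{T}\int_{t_0}^{t_0+T}g(t)\,\rd t\le\limsup_{t\to\infty}g(t)$ for nonnegative locally integrable $g$, which you prove correctly by splitting the integral; local integrability is indeed guaranteed by $\bw\in C([t_0,\infty);H)$. Your route is more modular and slightly more robust: the paper's definition $N=\max\{n:t_n=t_0+\kappa n\le t_0+T\}$ tacitly assumes uniformly spaced measurement times, whereas your argument needs no structure on $\{t_n\}$ at all. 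What the paper's more explicit computation buys in exchange is quantitative information en route to the limit --- a bound on the averages at finite $T$ in terms of $\theta^n$, $\|\bw(t_0)\|_{H^1}$ and the geometric sums --- which your soft argument discards.
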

\begin{proof}
 Denote $\bw = \bv - \bu$ and let $T \geq t_0$. From \eqref{unifboundsattractor} and \eqref{unifboundvH1}, we have that $\bv(t), \bu(t) \in B_V(3( M_1 + E_1))$, for all $t \in [t_0, \infty)$. Therefore, since the restriction of $\Phi$ to $B_V(3( M_1 + E_1))$ is a Lipschitz continuous function with Lipschitz constant $L_{\Phi}$, we obtain that
 \begin{multline}\label{ineqdifftimeav1}
  \left| \frac{1}{T}\int_{t_0}^{t_0 + T} \Phi(\bv(t)) \rd t - \frac{1}{T}\int_{t_0}^{t_0 + T} \Phi(\bu(t)) \rd t \right| \leq \\
  \leq \frac{L_{\Phi}}{T} \int_{t_0}^{t_0 + T} | \bw(t)|_{L^2} \rd t \leq \frac{L_{\Phi}}{\lambda_1^{1/2} T} \int_{t_0}^{t_0 + T} \| \bw(t)\|_{H^1} \rd t,
 \end{multline}
where in the last estimate we used Poincar\'e inequality \eqref{ineqPoincare}.

From inequality \eqref{ineqwGronwallGeneral3} in the proof of Theorem \ref{thmasympestwGeneral}, we have in particular that
\be
 \|\bw(t)\|_{H^1} \leq \|\bw(t_0)\|_{H^1} + c E_1, \quad \forall t \in [t_0, t_1].
\ee

Also, recall inequality \eqref{ineqwGronwallGeneral5}, given by
\be\label{ineqwGronwallGeneral5recall}
 \|\bw(t)\|_{H^1} \leq \theta^n \|\bw(t_0)\|_{H^1} + c E_1 \left( 1 + \sum_{j = 0}^{n-1} \theta^j \right), \quad \forall t \in [t_n,t_{n+1}], \forall n \geq 1,
\ee
with $\theta < 1$ given in \eqref{defthetaGeneral}.

Define
\[
 N = \max \{ n \in \mathbb{N}: t_n = t_0 + \kappa n \leq t_0 + T \}.
\]

Then, using \eqref{ineqdifftimeav1}-\eqref{ineqwGronwallGeneral5recall}, we obtain that
\begin{multline}\label{ineqdifftimeav2}
  \left| \frac{1}{T}\int_{t_0}^{t_0 + T} \Phi(\bv(t)) \rd t - \frac{1}{T}\int_{t_0}^{t_0 + T} \Phi(\bu(t)) \rd t \right| \leq \\
  \leq \frac{L_{\Phi}}{\lambda_1^{1/2} T} \left(\int_{t_N}^{t_0 + T} \|\bw(t)\|_{H^1} \rd t + \sum_{n=0}^{N-1} \int_{t_n}^{t_{n+1}} \|\bw(t)\|_{H^1} \rd t \right)\\
  \leq \frac{L_{\Phi}}{\lambda_1^{1/2} T} \sum_{n=0}^N  \int_{t_n}^{t_{n+1}} \|\bw(t)\|_{H^1} \rd t \\
  \leq \frac{L_{\Phi}\kappa}{\lambda_1^{1/2} T} \left[  \|\bw(t_0)\|_{H^1} + c E_1 + \right.\\
  \left. + \sum_{n=1}^N \left( \theta^n \|\bw(t_0)\|_{H^1} + c E_1 \left( 1 + \sum_{j=0}^{n-1} \theta^j \right)\right)\right] \\
  \leq \frac{L_{\Phi}\kappa}{\lambda_1^{1/2} T} \left[ \|\bw(t_0)\|_{H^1} \sum_{n=0}^N \theta^n + c E_1 \left( 1 + N + N \sum_{j=0}^{N-1} \theta^j \right)\right]\\
  \leq \frac{L_{\Phi}\kappa}{\lambda_1^{1/2} T} \|\bw(t_0)\|_{H^1} \sum_{n=0}^N \theta^n + c \frac{L_{\Phi}\nu E_1}{T \lambda_1^{1/2}} \left( \kappa + T + T\sum_{j=0}^{N-1} \theta^j \right),
\end{multline}
where in the last inequality we used that $\kappa N \leq T$. Inequality \eqref{asympestdifftimeav} then follows by taking the $\limsup$ as $T \rightarrow \infty$ in \eqref{ineqdifftimeav2}.
\end{proof}

Next, we show that, in the particular case when $E_1 = 0$, we can obtain a relation between ensemble averages associated to the reference solution $\bu$ of \eqref{functionaleq} and time averages associated to the corresponding solution $\bv$ of \eqref{DataAssEqDiscTimeGeneral}, with respect to an initial data $\bv_0 \in V$. This is motivated by the relation between time averages and ensemble averages already known for the Navier-Stokes equations, which is proved as in the Krylov-Bogolyubov theory \cite{KrylovBogolyubov1937}. 

For this purpose, we must recall the notion of Banach generalized limits (see, e.g., \cite[Section IV.1.3]{FMRT2001}). These are linear functionals denoted by $\LIM_{T\to \infty}$ and defined on the space of bounded real-valued functions on $[t_0,\infty)$, $\mB([t_0,\infty))$, which coincide with the classical limit whenever the latter exists. Moreover, it satisfies the following property
\be\label{propLIM}
 \liminf_{T\to\infty} g(T) \leq  \underset{T\to \infty}{\LIM} g(T) \leq \limsup_{T\to\infty} g(T), \quad \forall g \in \mB([t_0,\infty)).
\ee

Let us denote by $\mC(H)$ the space of continuous real-valued functions on $H$. Recall that $\{S(t)\}_{t\geq t_0}$, given in \eqref{defsemigroup}, denotes the semigroup associated to the 2D Navier-Stokes equations on $[t_0,\infty)$. We then have the following known result. 

\begin{thm}\label{thmEqTimeAvEnsAvNSE}
 Let $\bu_0 \in \mA$. Then, there exists an invariant probability measure $\mu_{\bu_0}$ with respect to $\{S(t)\}_{t\geq t_0}$ such that, for all $\Phi \in \mC(H)$,
 \[
   \underset{T\to \infty}{\LIM} \frac{1}{T}\int_{t_0}^{t_0+T} \Phi(S(t) \bu_0) \rd t = \int_H \Phi(\boldsymbol{\xi}) \rd \mu_{\bu_0}(\boldsymbol{\xi}).
 \]
\end{thm}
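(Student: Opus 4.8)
The plan is to realize the prescribed time averaging as a positive, normalized linear functional on continuous functions over the attractor, represent it by a measure via Riesz--Markov, and then verify invariance last; this is the standard Krylov--Bogolyubov construction adapted to the Banach-limit framework. First I would fix $\bu_0 \in \mA$ and recall that, by the invariance of the global attractor, the entire trajectory $\{S(t)\bu_0\}_{t\geq t_0}$ stays in the compact set $\mA \subset H$. Hence for every $\Phi \in \mC(H)$ the restriction $\Phi|_{\mA}$ is bounded, say $\sup_{\bxi \in \mA}|\Phi(\bxi)| =: K < \infty$, so that the map
\[
 T \longmapsto \frac{1}{T}\int_{t_0}^{t_0+T}\Phi(S(t)\bu_0)\,\rd t
\]
lies in $\mB([t_0,\infty))$ and the generalized Banach limit $\underset{T\to\infty}{\LIM}$ applies to it. I would therefore define
\[
 \mathcal{L}(\Phi) = \underset{T\to\infty}{\LIM}\frac{1}{T}\int_{t_0}^{t_0+T}\Phi(S(t)\bu_0)\,\rd t, \quad \Phi \in \mC(H).
\]

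Next I would record that $\mathcal{L}$ inherits, from the integral and from the linearity and positivity of the Banach limit, the properties of being linear, nonnegative on nonnegative functions, and normalized (i.e. $\mathcal{L}(1)=1$). Restricting $\mathcal{L}$ to $\mC(\mA)$, the continuous functions on the compact metric space $\mA$ to which every $\Phi \in \mC(H)$ restricts, the Riesz--Markov--Kakutani representation theorem yields a unique regular Borel probability measure $\mu_{\bu_0}$ on $\mA$, which I extend to $H$ by $\mu_{\bu_0}(\cdot) := \mu_{\bu_0}(\cdot \cap \mA)$, such that
\[
 \mathcal{L}(\Phi) = \int_H \Phi(\bxi)\,\rd\mu_{\bu_0}(\bxi), \quad \forall \Phi \in \mC(H).
\]
This already gives the asserted identity, so only the invariance of $\mu_{\bu_0}$ under $\{S(t)\}_{t\geq t_0}$ remains to be checked.

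The crux, and the step I expect to require the most care, is this invariance. For fixed $t \geq t_0$ I would note that $\Phi \circ S(t) \in \mC(H)$, since continuity of $S(t): H \to H$ follows from the continuous dependence on initial data in Theorem \ref{thmexistuniqweaksolNSE}. Applying the representation to $\Phi \circ S(t)$ and using the semigroup property together with the autonomy of \eqref{functionaleq}, which gives $S(t)S(\tau)\bu_0 = S(\tau + t - t_0)\bu_0$, I would set $r = t - t_0 \geq 0$ and change variables $\tau \mapsto \tau + r$ to obtain
\[
 \int_H \Phi(S(t)\bxi)\,\rd\mu_{\bu_0}(\bxi) = \underset{T\to\infty}{\LIM}\frac{1}{T}\int_{t_0+r}^{t_0+T+r}\Phi(S(\tau)\bu_0)\,\rd\tau.
\]
The difference between this averaged quantity and $\frac{1}{T}\int_{t_0}^{t_0+T}\Phi(S(\tau)\bu_0)\,\rd\tau$ consists of two boundary integrals over intervals of length $r$, hence is bounded in absolute value by $2Kr/T \to 0$ as $T \to \infty$.

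By the squeezing property \eqref{propLIM}, the Banach limit of a function tending to $0$ is $0$, so the two generalized limits coincide; consequently $\int_H \Phi(S(t)\bxi)\,\rd\mu_{\bu_0} = \mathcal{L}(\Phi) = \int_H \Phi\,\rd\mu_{\bu_0}$ for every $\Phi \in \mC(H)$, which is exactly the invariance of $\mu_{\bu_0}$. The delicate points to handle are the legitimacy of applying $\underset{T\to\infty}{\LIM}$, which rests on the uniform bound $K$ provided by the compactness of $\mA$, and the fact that the generalized limit, although not well-behaved under arbitrary algebraic manipulation, does respect the principle that a vanishing perturbation leaves its value unchanged; this principle is precisely what \eqref{propLIM} supplies, and it is the hinge of the whole argument.
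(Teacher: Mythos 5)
Your proposal is correct, and it is precisely the argument the paper itself defers to \cite[Chapter 4]{FMRT2001} rather than reproducing: the Krylov--Bogolyubov construction adapted to a generalized Banach limit, i.e.\ realizing $\Phi \mapsto \LIM_{T\to\infty}\frac{1}{T}\int_{t_0}^{t_0+T}\Phi(S(t)\bu_0)\,\rd t$ as a positive normalized linear functional, representing it by a Borel probability measure on the compact set $\mA$ via Riesz--Markov, and deducing invariance from the semigroup property together with the fact that the generalized limit annihilates the $O(1/T)$ boundary terms, by \eqref{propLIM}. The only step worth making fully explicit is the transfer of the functional to all of $\mC(\mA)$: its value depends only on $\Phi|_{\mA}$ because the trajectory lies in $\mA$, and every element of $\mC(\mA)$ arises as such a restriction by the Tietze extension theorem, after which Riesz--Markov applies exactly as you use it.
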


The proof of Theorem \ref{thmEqTimeAvEnsAvNSE} can be found in \cite[Chapter 4]{FMRT2001}.

We now have the result showing the equality between ensemble averages of physical quantities associated to the reference solution and time averages of the same quantities associated to the corresponding approximating solution. Notice that here the function $\Phi$ representing the physical quantity belongs to $\mC(H)$.

\begin{thm}\label{thmEqualityTimeAvEnsembleAv}
  Under the hypotheses of Theorem \ref{thmasympestwGeneral}, denote $\bu_0 = \bu(t_0)$ and assume that $E_1 = 0$. Then, given a generalized limit $\LIM_{T\to \infty}$, there exists an invariant probability measure $\mu_{\bu_0}$ with respect to $\{S(t)\}_{t\geq t_0}$ such that 
 \[
  \underset{T\to \infty}{\LIM} \frac{1}{T} \int_{t_0}^{t_0 + T} \Phi(\bv(t)) \rd t = \int_H \Phi(\boldsymbol{\xi}) \rd \mu_{\bu_0}(\boldsymbol{\xi}), \quad \forall \Phi \in \mC(H).
 \]
\end{thm}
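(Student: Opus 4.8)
The plan is to reduce the assertion to the already-known equality between time and ensemble averages for the Navier--Stokes semigroup, Theorem \ref{thmEqTimeAvEnsAvNSE}, by showing that the time average of $\Phi(\bv(t))$ differs from that of $\Phi(\bu(t))$ by a quantity tending to zero as $T\to\infty$, and then exploiting the linearity of the generalized limit together with property \eqref{propLIM}. Set $\bw=\bv-\bu$. Since $E_1=0$, Theorem \ref{thmasympestwGeneral} gives $\|\bw(t)\|_{H^1}\to 0$, whence by Poincar\'e's inequality \eqref{ineqPoincare} also $|\bw(t)|_{L^2}\to 0$, as $t\to\infty$. Because $\bu$ is a trajectory in $\mA$ we have $S(t)\bu_0=\bu(t)$ by \eqref{defsemigroup}, so Theorem \ref{thmEqTimeAvEnsAvNSE} applied to $\bu_0=\bu(t_0)\in\mA$ and the given $\LIM_{T\to\infty}$ furnishes the invariant measure $\mu_{\bu_0}$ for which
\[
 \underset{T\to\infty}{\LIM}\frac{1}{T}\int_{t_0}^{t_0+T}\Phi(\bu(t))\,\rd t=\int_H\Phi(\bxi)\,\rd\mu_{\bu_0}(\bxi),\qquad\forall\,\Phi\in\mC(H).
\]

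The point that lets us move from a general $\Phi\in\mC(H)$, which is merely continuous on $H$, to a uniform-in-time estimate is that both trajectories stay in a fixed bounded ball of $V$. Indeed $\bu(t)\in\mA\subseteq B_V(M_1)$ for all $t$, while the Remark following Theorem \ref{thmasympestwGeneral} gives $\|\bv(t)\|_{H^1}\leq 3M_1$ for all $t\geq t_0$ (here $E_1=0$); thus $\bv(t),\bu(t)\in B_V(3M_1)$. Since the injection $V\hookrightarrow H$ is compact, the closure $K$ of $B_V(3M_1)$ in $H$ is compact, so $\Phi|_K$ is uniformly continuous and bounded, say $|\Phi|\leq C_\Phi$ on $K$; in particular both time averages are bounded functions of $T$, so the functional $\LIM_{T\to\infty}$ applies to them. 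First I would fix $\varepsilon>0$, choose $\delta>0$ with $|\Phi(\bxi_1)-\Phi(\bxi_2)|<\varepsilon$ for $\bxi_1,\bxi_2\in K$, $|\bxi_1-\bxi_2|_{L^2}<\delta$, and then use $|\bw(t)|_{L^2}\to 0$ to find $t_*\geq t_0$ with $|\Phi(\bv(t))-\Phi(\bu(t))|<\varepsilon$ for all $t\geq t_*$. Splitting the integral over $[t_0,t_*]$ and $[t_*,t_0+T]$ yields
\[
 \left|\frac{1}{T}\int_{t_0}^{t_0+T}\Phi(\bv(t))\,\rd t-\frac{1}{T}\int_{t_0}^{t_0+T}\Phi(\bu(t))\,\rd t\right|\leq\frac{2C_\Phi(t_*-t_0)}{T}+\varepsilon,
\]
so the $\limsup$ as $T\to\infty$ is at most $\varepsilon$; as $\varepsilon$ was arbitrary, this difference converges (in the ordinary sense) to $0$.

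To conclude I would write the $\bv$-average as the $\bu$-average plus a remainder $D(T)$ with $D(T)\to 0$. By linearity of $\LIM_{T\to\infty}$ and property \eqref{propLIM} we get $\LIM_{T\to\infty}D(T)=0$, and combining this with the displayed identity above from Theorem \ref{thmEqTimeAvEnsAvNSE} produces exactly
\[
 \underset{T\to\infty}{\LIM}\frac{1}{T}\int_{t_0}^{t_0+T}\Phi(\bv(t))\,\rd t=\int_H\Phi(\bxi)\,\rd\mu_{\bu_0}(\bxi),
\]
as claimed. The one genuinely substantive step is the passage from the pointwise continuity of $\Phi$ to a uniform modulus along the orbits: this is where the compact embedding $V\hookrightarrow H$ and the a priori $V$-bound on $\bv$ from the Remark are indispensable, since without confining the trajectories to a compact subset of $H$ one could not control $|\Phi(\bv(t))-\Phi(\bu(t))|$ uniformly from $|\bw(t)|_{L^2}\to 0$ alone. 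Everything else is the routine splitting-of-the-average estimate together with the defining properties of the Banach generalized limit.
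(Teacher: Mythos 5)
Your proof is correct, but it handles the crucial step --- treating merely continuous $\Phi$ rather than Lipschitz ones --- by a genuinely different route than the paper. The paper first invokes its Theorem \ref{thmasympestdifftimeav} (with $E_1=0$) to obtain the vanishing of the time-average difference for every $\Phi$ whose restriction to $B_V(3(M_1+E_1))$ is Lipschitz in the $L^2$ norm, and then extends to arbitrary $\Phi\in\mC(H)$ by a density argument: since $B_V(3(M_1+E_1))$ is compact in $H$, the Lipschitz functions are dense (Stone--Weierstrass) in the continuous functions on that ball, and the limsup estimate passes to uniform limits. You instead bypass both Theorem \ref{thmasympestdifftimeav} and the density step: you use the exponential convergence $\|\bv(t)-\bu(t)\|_{H^1}\to 0$ from Theorem \ref{thmasympestwGeneral}, confine both orbits to the $H$-compact set $\overline{B_V(3M_1)}$, and exploit the uniform continuity of $\Phi$ there together with an elementary splitting of the integral over $[t_0,t_*]$ and $[t_*,t_0+T]$. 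Both arguments rest on the same two pillars --- the a priori bound $\|\bv(t)\|_{H^1}\leq 3(M_1+E_1)$ from the Remark following Theorem \ref{thmasympestwGeneral}, and the compactness of $V$-balls in $H$ --- but yours is more self-contained and elementary for the error-free case, while the paper's derivation exhibits the theorem as a direct corollary of the quantitative estimate \eqref{asympestdifftimeav}, which also covers $E_1>0$ and makes the dependence on the Lipschitz constant explicit. The concluding step (boundedness of the time averages, linearity of $\LIM_{T\to\infty}$, property \eqref{propLIM}, and Theorem \ref{thmEqTimeAvEnsAvNSE}) is the same in both proofs.
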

\begin{proof}
Since $\bv(t), \bu(t) \in B_V(3( M_1 + E_1))$, for all $t \in [t_0, \infty)$, and $E_1 = 0$, from Theorem \ref{thmasympestdifftimeav} it follows that, for every function $\Phi : H \to \mathbb{R}$ such that its restriction to $B_V(3( M_1 + E_1))$ is a Lipschitz continuous function, we have
\be\label{TimeAvEqual}
 \limsup_{T\to\infty} \left| \frac{1}{T}\int_{t_0}^ {t_0+T} \Phi(\bv(t)) \rd t - \frac{1}{T}\int_{t_0}^ {t_0+T} \Phi(\bu(t)) \rd t\right| = 0.
\ee
But since $B_V(3( M_1 + E_1))$ is compact in $H$, by the Stone-Weierstrass Theorem, the set of Lipschitz continuous functions on $B_V(3( M_1 + E_1))$, with respect to the norm topology in $H$, is dense in the space of continuous functions on $B_V(3( M_1 + E_1))$. Therefore, \eqref{TimeAvEqual} is also valid for every $\Phi \in \mC(H)$. Using \eqref{propLIM}, this implies that
\[
 \underset{T\to\infty}{\LIM} \left( \frac{1}{T}\int_{t_0}^ {t_0+T} \Phi(\bv(t)) \rd t - \frac{1}{T}\int_{t_0}^ {t_0+T} \Phi(\bu(t)) \rd t\right) = 0, \quad \forall \Phi \in \mC(H).
\]
Thus, from Theorem \ref{thmEqTimeAvEnsAvNSE}, we obtain that
\begin{multline}\label{eqTimeAvEnsembleAvVMuU}
  \underset{T\to\infty}{\LIM} \frac{1}{T}\int_{t_0}^ {t_0+T} \Phi(\bv(t)) \rd t = \underset{T\to\infty}{\LIM}\frac{1}{T}\int_{t_0}^ {t_0+T} \Phi(\bu(t)) \rd t = \int_H \Phi(\boldsymbol{\xi}) \rd \mu_{\bu_0}(\boldsymbol{\xi}),\\
  \forall \Phi \in \mC(H).
\end{multline}

\end{proof}

The result of Theorem \ref{thmEqualityTimeAvEnsembleAv} above provide us with a way of obtaining information on the averages of physical quantities associated to the unknown reference solution through time averages of the same quantities associated to the approximating solution, which is computed by using our discrete data assimilation algorithm.

\section{Acknowledgements}

The work of C. F. was supported by the ONR grant N00014-15-1-2333. The work of C. F. M. was supported by CNPq, Conselho Nacional de Desenvolvimento Científico e Tecnológico - Brazil, under the grant 203552/2014-8. The work of E. S. T. was supported in part by the ONR grant N00014-15-1-2333 and the NSF grants DMS-1109640 and DMS-1109645.

\setcounter{equation}{0}
\appendix

\section*{Appendix}
\renewcommand{\thesection}{A}
\renewcommand{\theequation}{{A.}\arabic{equation}}

In this appendix, we consider a concrete and physically relevant example of an interpolant operator $I_h$ given by local averages over finite volume elements (see, e.g., \cite{FoiasTiti1991,JonesTiti1992,JonesTiti1993}). Our purpose is to show explicit uniform estimates of the $L^2(\Omega)^2$ and $H^1(\Omega)^2$ norms for the sequence of errors $\{\eta_n\}_{n\in\mathbb{N}}$, which was introduced in section \ref{secDiscDataAssAlg}. With this we intend to show, in particular, that the inequality \eqref{bounderrorV} considered in Theorem \ref{thmasympestwGeneral} is a natural condition on such errors, that can be effectively verified in applications.

Let $\Omega \subset \mathbb{R}^2$ be an open and bounded set. Let us consider a spatial mesh with resolution of size $h >0$, given by a family $\{\mU_j\}_{1\leq j \leq N_h}$ of disjoint sets in $\mathbb{R}^2$ with strictly positive measure $|\mU_j|$, such that $\Omega = \bigcup_{j=1}^{N_h} \mU_j$ and such that the diameter $\textnormal{diam}(\mU_j)$ of each $\mU_j$ satisfies $\textnormal{diam}(\mU_j) \leq h$.

Now, for each $\mU_j$, $j = 1, \ldots, N_h$, let $\widetilde{\mU}_j$ be an open set with $\mU_j \subset \widetilde{\mU}_j$ and such that $\textnormal{diam}(\widetilde{\mU}_j) < 2h$. Then, let $\{\Psi_j\}_{1\leq j \leq N_h}$ be a partition of unity subordinated to  $\{\widetilde{\mU}_j\}_{1\leq j \leq N_h}$, with each $\Psi_j$ being a smooth and compactly supported real-valued function on $\mathbb{R}^2$ such that $\textnormal{supp}\Psi_j \subset \widetilde{\mU}_j$, 
\be\label{boundPsij}
 0 \leq \Psi_j(x) \leq 1, \quad \forall x \in \mathbb{R}^2,
\ee
\be\label{sumPsijEquals1}
 \sum_{j=1}^{N_h} \Psi_j(x) = 1, \quad \forall x \in \Omega,
\ee
and
\be\label{boundgradPsij}
 |\nabla \Psi_j(x)| \leq \frac{C_0}{h}, \quad \forall x \in \mathbb{R}^2,
\ee
where $C_0$ is a constant.

Note that, for all $j = 1, \ldots,N_h$, we have that the measure $|\widetilde{\mU}_j|$ of $\widetilde{\mU}_j$ satisfies $|\widetilde{\mU}_j| \leq 4h^2$, so that
\[
 \sum_{j=1}^{N_h} |\widetilde{\mU}_j| \leq 4 N_h h^2.
\]
We assume that $h$ is small enough so that 
\be\label{assumptionHDelta}
 4 N_h h^2 \leq 2 |\Omega|,
\ee
where $|\Omega|$ denotes the measure of $\Omega$.

For each $k \in \mathbb{N}$, let $\widetilde{\mU}_{j_1}, \ldots,\widetilde{\mU}_{j_{n_k}}$ be all the sets from the family $\{\widetilde{\mU}_j\}_{1 \leq j \leq N_h}$ which have a non-empty intersection with $\widetilde{\mU}_k$, i.e.
\be\label{UkUjiNonempty}
 \widetilde{\mU}_k \cap \widetilde{\mU}_{j_i} \neq \emptyset, \quad \forall i=1,\ldots,n_k,
\ee
and
\be\label{UkUjempty}
 \widetilde{\mU}_k \cap \widetilde{\mU}_j = \emptyset, \quad \forall j \notin \{j_1,\ldots,j_{n_k}\}.
\ee
We assume that there exists a constant $C_1$, which is independent of $h$, such that 
\be\label{boundNk}
 n_k \leq C_1, \quad \forall k \in \mathbb{N}.
\ee

\begin{rmk}
 In the case $\Omega = (0,L) \times (0,L)$, $L>0$, one can think of the spatial mesh for $\Omega$ as, for example, a square mesh grid, with each $\mU_j$ being a square with diameter $h$; $\Psi_j$ being the mollification of the characteristic function of $\mU_j$; and $\widetilde{\mU}_j$ being an open square with diameter $h+ \delta$, with $\delta < h$ depending on the mollification parameter (see, e.g., \cite[Appendix A]{Azouanietal2014} for more details).

The reason why we consider smooth functions instead of the characteristic functions of the sets $\mU_j$ is that we want to obtain, in particular, an estimate of $\eta_n$, given explicitly below in \eqref{eta_n_appendix}, with respect to the norm in $H^1(\Omega)^2$, which involves the gradient of $\eta_n$.
\end{rmk}

For each $j= 1,\ldots,N_h$, we denote by $\psi_j$ the restriction of the function $\Psi_j$ to the set $\Omega$. Note that, in particular, relations \eqref{boundPsij}-\eqref{boundgradPsij} are also satisfied by $\psi_j$, for all $x \in \Omega$.

Now, consider the interpolant operator $I_h: L^2(\Omega)^2 \to L^2(\Omega)^2$ given by
\be\label{defIhAppendix}
 I_h(\varphi) = \sum_{j=1}^{N_h} \overline{\varphi}_j \psi_j, \quad \forall \varphi \in L^2(\Omega)^2,
\ee
where 
\be
 \overline{\varphi}_j = \frac{1}{|\mU_j|}\int_{\mU_j} \varphi(x) \rd x
\ee
is the local average of $\varphi$ over the volume element $\mU_j$.

We consider the measurement over each volume element $\mU_j$, $j=1,\ldots,N_h$, at time $t_n$, given by
\be
 \overline{\bu}_j(t_n) + \varepsilon_{n,j},
\ee
where 
\be
 \overline{\bu}_j(t_n) = \frac{1}{|\mU_j|}\int_{\mU_j} \bu(t_n,x) \rd x
\ee
is the local average over $\mU_j$ of the exact value of the reference solution $\bu$ at time $t_n$, and $\varepsilon_{n,j}$ is the corresponding error vector in the measurement. Note that the values $\overline{\bu}_j(t_n)$ and $\varepsilon_{n,j}$ are not known separately, but only their sum. However, an estimate of the maximum size of the errors $\varepsilon_{n,j}$, $n\in \mathbb{N}$, $j = 1,\ldots,N_h$, is usually given in terms of the accuracy associated to the devices used in an experiment. We denote the maximum error size by $\varepsilon > 0$, so that
\be\label{boundErrorsEpsilon_nj}
 |\varepsilon_{n,j}| \leq \varepsilon, \quad \forall n \in \mathbb{N}, \forall j = 1, \ldots, N_h.
\ee

Then, using the interpolant operator $I_h$ based on these values, we obtain, for each $n \in \mathbb{N}$, a vector field $\tilde{\bu}(t_n)$ defined on $\Omega$, given by
\[
 \tilde{\bu}(t_n) = \sum_{j=1}^{N_h} \overline{\bu}_j(t_n) \psi_j + \sum_{j=1}^{N_h} \varepsilon_{n,j} \psi_j.
\]
Comparing with the definition of $\tilde{\bu}(t_n)$ given in \eqref{repmeasurements}, we then have
\be
 I_h(\bu(t_n)) = \sum_{j=1}^{N_h} \overline{\bu}_j(t_n) \psi_j, 
\ee
and 
\be\label{eta_n_appendix}
 \eta_n = \sum_{j=1}^{N_h} \varepsilon_{n,j} \psi_j.
\ee

In the following proposition, we obtain estimates for the error term $\eta_n$ given in \eqref{eta_n_appendix}.

\begin{prop}\label{propAppendix}
 For all $n \in \mathbb{N}$, the following inequalities hold
 \begin{enumerate}[(i)]
  \item\label{propAppendix_i}  $\displaystyle |\eta_n|_{L^2} \leq \varepsilon |\Omega|^{1/2}$;
  \vspace{0.2cm}
  \item\label{propAppendix_ii} $\displaystyle \|\eta_n\|_{H^1} \leq c\frac{\varepsilon}{h} |\Omega|^{1/2}$,
  where $c = C_0 (2C_1)^{1/2}$, with $C_0$ and $C_1$ being the constants from \eqref{boundgradPsij} and \eqref{boundNk}, respectively.
 \end{enumerate}
\end{prop}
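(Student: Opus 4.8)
The plan is to read off both bounds directly from the three structural features of the partition of unity $\{\psi_j\}$: the pointwise bounds \eqref{boundPsij}--\eqref{sumPsijEquals1} will give part \eqref{propAppendix_i}, while the gradient bound \eqref{boundgradPsij} together with the finite-overlap condition \eqref{boundNk} and the volume assumption \eqref{assumptionHDelta} will give part \eqref{propAppendix_ii}. For part \eqref{propAppendix_i} I would argue entirely pointwise. Since $\eta_n(x) = \sum_{j=1}^{N_h} \varepsilon_{n,j}\psi_j(x)$ with each $\psi_j(x) \ge 0$, the triangle inequality in $\mathbb{R}^2$ gives $|\eta_n(x)| \le \sum_{j} |\varepsilon_{n,j}|\,\psi_j(x)$; using the uniform error bound \eqref{boundErrorsEpsilon_nj} and then the identity \eqref{sumPsijEquals1}, this is at most $\varepsilon \sum_j \psi_j(x) = \varepsilon$ for every $x \in \Omega$. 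Integrating $|\eta_n(x)|^2 \le \varepsilon^2$ over $\Omega$ yields $|\eta_n|_{L^2} \le \varepsilon|\Omega|^{1/2}$.

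For part \eqref{propAppendix_ii} I would differentiate \eqref{eta_n_appendix} to write $\nabla \eta_n(x) = \sum_{j} \varepsilon_{n,j}\otimes\nabla\psi_j(x)$, a sum of rank-one matrices, so that, by the definition of the norm in Section \ref{sec2DNSESetting}, $\|\eta_n\|_{H^1}^2 = \int_\Omega |\nabla\eta_n(x)|^2\,\rd x$. The first key observation is that at a fixed $x$ only the indices $j$ with $x \in \widetilde{\mU}_j$ contribute, since $\nabla\psi_j$ is supported in $\widetilde{\mU}_j$; and if $x \in \widetilde{\mU}_k$ then every such $j$ satisfies $\widetilde{\mU}_j \cap \widetilde{\mU}_k \neq \emptyset$, so by \eqref{UkUjiNonempty}--\eqref{boundNk} there are at most $C_1$ of them. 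Applying the Cauchy--Schwarz inequality to this sum of at most $C_1$ terms gives the pointwise estimate $|\nabla\eta_n(x)|^2 \le C_1 \sum_{j} |\varepsilon_{n,j}|^2\,|\nabla\psi_j(x)|^2$. Integrating over $\Omega$ and using $|\varepsilon_{n,j}| \le \varepsilon$ reduces everything to controlling $\sum_{j}\int_\Omega |\nabla\psi_j|^2\,\rd x$.

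The second step, and the only place requiring care, is to estimate this last sum so as to obtain the sharp constant $(2C_1)^{1/2}$ rather than a cruder $C_1$. Here I would use $|\nabla\psi_j| \le C_0/h$ from \eqref{boundgradPsij} and the support property to get $\int_\Omega |\nabla\psi_j|^2\,\rd x \le (C_0/h)^2\,|\widetilde{\mU}_j|$, and then invoke the volume estimate $\sum_j |\widetilde{\mU}_j| \le 4N_h h^2 \le 2|\Omega|$ furnished by assumption \eqref{assumptionHDelta}. Combining these yields $\|\eta_n\|_{H^1}^2 \le C_1\,\varepsilon^2\,(C_0/h)^2\cdot 2|\Omega|$, which is exactly part \eqref{propAppendix_ii} with $c = C_0(2C_1)^{1/2}$. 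The subtlety to watch for is the temptation to apply the overlap bound \eqref{boundNk} a second time when summing over the supports; instead one lets the factor $\sum_j |\widetilde{\mU}_j|\le 2|\Omega|$ coming from \eqref{assumptionHDelta} absorb the remaining overlap, which is precisely what converts the naive bound into the stated one with the factor $(2C_1)^{1/2}$.
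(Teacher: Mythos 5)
Your proof is correct and follows essentially the same route as the paper's: part \eqref{propAppendix_i} rests on \eqref{boundPsij}--\eqref{sumPsijEquals1} and the uniform error bound \eqref{boundErrorsEpsilon_nj}, while part \eqref{propAppendix_ii} uses the overlap bound \eqref{boundNk} exactly once, the gradient bound \eqref{boundgradPsij}, and the volume estimate \eqref{assumptionHDelta}, producing the same constant $c = C_0(2C_1)^{1/2}$. The only cosmetic difference is that you apply Cauchy--Schwarz pointwise to the sum of at most $C_1$ nonvanishing terms $\varepsilon_{n,j}\otimes\nabla\psi_j(x)$ and then integrate only diagonal terms $\int_\Omega |\nabla\psi_j|^2\,\rd x$, whereas the paper expands $\|\eta_n\|_{H^1}^2$ as a double sum over pairs of intersecting supports and applies H\"older to each cross term; the two bookkeepings are equivalent in substance and in the resulting constant.
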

\begin{proof}
 Using \eqref{sumPsijEquals1} and \eqref{boundErrorsEpsilon_nj}, we obtain that
 \begin{multline*}
  |\eta_n|_{L^2}^2 = \left| \int_{\Omega} \left( \sum_{k=1}^{N_h} \varepsilon_{n,k} \psi_k(x) \right) \left( \sum_{j=1}^{N_h} \varepsilon_{n,j} \psi_j(x) \right) \rd x \right| \\
  \leq \varepsilon^2 \int_{\Omega} \left( \sum_{k=1}^{N_h} \psi_k(x) \right) \left( \sum_{j=1}^{N_h} \psi_j(x) \right) \rd x = \varepsilon^2 |\Omega|,
 \end{multline*}
 which proves \eqref{propAppendix_i}.
 
 Now, for the proof of \eqref{propAppendix_ii}, note that
 \begin{multline*}
  \|\eta_n\|_{H^1}^2 = \left| \left( \sum_{k=1}^{N_h} \varepsilon_{n,k} \nabla \psi_k(x) \right) \cdot  \left( \sum_{j=1}^{N_h} \varepsilon_{n,j} \nabla \psi_j(x) \right) \right| \\
  \leq \varepsilon^2 \sum_{k=1}^{N_h} \sum_{j=1}^{N_h} \int_{\widetilde{\mU}_k \cap \widetilde{\mU}_j \cap \Omega} |\nabla \psi_k(x)| |\nabla \psi_j(x)| \rd x.
 \end{multline*}
 Then, using \eqref{UkUjiNonempty}, \eqref{UkUjempty} and H\"older inequality, we obtain
 \[
  \|\eta_n\|_{H^1}^2 \leq \varepsilon^2 \sum_{k=1}^{N_h} \sum_{i=1}^{n_k} \left( \int_{\widetilde{\mU}_k \cap \Omega} |\nabla \psi_k(x)|^2 \rd x \right)^{1/2}  \left( \int_{\widetilde{\mU}_{j_i} \cap \Omega} |\nabla \psi_{j_i}(x)|^2 \rd x \right)^{1/2}.
 \]

 From \eqref{boundgradPsij}, \eqref{assumptionHDelta} and \eqref{boundNk}, it follows that
 \[
  \|\eta_n\|_{H^1}^2 \leq \varepsilon^2 \frac{C_0^2}{h^2} \sum_{k=1}^{N_h} \sum_{i=1}^{n_k} |\widetilde{\mU}_k|^{1/2} |\widetilde{\mU}_{j_i}|^{1/2} 
  \leq \varepsilon^2 \frac{C_0^2}{h^2} C_1 4 N_h h^2 \leq \varepsilon^2 \frac{C_0^2}{h^2} C_1 2 |\Omega|,
 \]
 which proves \eqref{propAppendix_ii}.
\end{proof}

Comparing inequality \eqref{propAppendix_ii} of Proposition \ref{propAppendix} with the condition \eqref{bounderrorV} on the sequence $\{\eta_n\}_{n\in\mathbb{N}}$ given in Theorem \ref{thmasympestwGeneral}, we see that $E_1$ in this particular case of $I_h$ defined by \eqref{defIhAppendix}, is given by
\be
 E_1 =  C_0 (2C_1)^{1/2} \frac{\varepsilon}{h} |\Omega|^{1/2},
\ee
where $C_0$ and $C_1$ are the constants from \eqref{boundgradPsij} and \eqref{boundNk}, respectively.

\end{document}